\documentclass[11pt]{amsart}

\usepackage[T1]{fontenc}
\usepackage{lmodern}
\usepackage{microtype}
\usepackage{amsmath,amssymb,mathtools}
\usepackage{mathrsfs}
\usepackage{enumitem}
\usepackage{tabularx}
\usepackage[hidelinks]{hyperref}

\numberwithin{equation}{section}

\newtheorem{theorem}{Theorem}[section]
\newtheorem{prop}[theorem]{Proposition}
\newtheorem{lemma}[theorem]{Lemma}

\theoremstyle{remark}

\newcommand{\M}{\mathcal M}
\newcommand{\N}{\mathbb N}
\newcommand{\Nzero}{\mathbb N_0}
\newcommand{\cP}{\mathcal P}
\newcommand{\dd}{\,\mathrm d}
\newcommand{\hUC}{h_{\mathrm{top}}^{\mathrm{UC}}}
\newcommand{\hP}{h_{\mathrm{top}}^{P}}
\newcommand{\hB}{h_{\mathrm{top}}^{B}}
\newcommand{\htop}{h_{\mathrm{top}}}

\newcommand{\TV}{\mathrm{TV}}

\title[
]
{Entropies of compact subsets and supported measures 
}

\author{Qiang Huo}
\address{$^1$ School of Mathematical Sciences \& State Key Laboratory of Cognitive Intelligence, University of Science and Technology of China, Hefei, Anhui, 230026, P. R. China}
\email{qianghuo@ustc.edu.cn}

\author{Xiangtong Wang}
\address{$^2$ School of Mathematical Sciences \& State Key Laboratory of Cognitive Intelligence, University of Science and Technology of China, Hefei, Anhui, 230026, P. R. China.}
\email{wxt2020@mail.ustc.edu.cn}
\date{\today}
\subjclass[2020]{37B40, 37A35, 28A78, 60B05, 54H20}
\keywords{Upper-capacity entropy; Bowen entropy; Packing entropy;  Local entropy; Subsets; Probability measures}
\begin{document}
\begin{abstract}
Let $(X,T)$ be a topological dynamical system and  $(\M(X),T_*)$ be its induced system.  For a non-empty compact subset $K\subset X$,  we define $\M(K)$ as the  set of Borel probability measures supported on $K$. 
 In this paper, we systematically study the relationship between various entropies of $(T,K)$ and of $(T_*,\M(K))$. We show that:
 
 \begin{equation*}
 	\begin{aligned}
 & \hUC(T,K)>0 \iff \hUC(T_*,\mathcal{M}(K))=+\infty, \\
 &\hP(T,K)>0\iff \hP(T_*,\mathcal{M}(K))>0,\\
 &\hB(T,K)>0 \implies \hB(T_*,\mathcal{M}(K))>0 ,
\end{aligned}
\end{equation*}
where $\hUC(T,K)$, $\hP(T,K)$, and $\hB(T,K)$ denote the upper capacity topological entropy, the packing topological entropy, and the Bowen topological entropy of $K$, respectively. Additionally,  we present a counterexample involving a non-invariant set, demonstrating that the converse of the third assertion is not valid in general.

\end{abstract}

\maketitle

\section{Introduction}
\label{sec:introduction}
Topological entropy, introduced by Adler, Konheim and McAndrew \cite{AKM65}, is a fundamental topological invariant which quantifies the average complexity of orbits over the entire phase space. Let $(X,T)$ be a topological dynamical system and $(\M(X),T_*)$ be its induced system on the space of Borel probability measures. It is natural to investigate the relation between  the complexity of  $(X,T)$ and that of $(\M(X),T_*)$. The pioneering work of Bauer and Sigmund \cite{BauerSigmund} showed that positive topological entropy of $(X,T)$ forces  $(\M(X),T_*)$ to have infinite topological entropy. This phenomenon demonstrates that a dramatic amplification of dynamical complexity may occur when passing to the induced system.  Glasner and Weiss \cite{GlasnerWeiss} further proved that $(X,T)$ has zero topological entropy if so does $(\M(X),T_*)$.
Their results together yield a dichotomy for the topological entropy of an induced system - it is either zero or infinite.

Along this line, a series of subsequent research have been carried out.
For instance, Liu, Qiao and Xu \cite{LQX20} studied topological entropies of nonautonomous dynamical systems and their induced systems, and obtained that a nonautonomous dynamical system has positive topology entropy if and only if its induced system has infinite topological entropy.
Liu and Qiao \cite{LQrelative} established a relative version of the above dichotomy by proving that a factor map  has positive relative topological entropy if and only if the induced factor map has infinite relative topological entropy. In addition to topological entropy, there are other topological invariants - such as topological sequence entropy and entropy dimension - that quantify the complexity of a system in different ways.  We refer the interested readers to \cite{KL05,QZ17}, where the sequence entropy and entropy dimension of induced systems are systematically investigated.

Since dynamical systems often exhibit dramatically different behaviors in distinct regions, global entropy fails to reflect local structure. This motivates the development towards the local entropy theory \cite{BR24, Blan93, BK83, DZZ23, HY06, LZ26, WWZ22}. In this paper, we focus specifically on Bowen entropy and packing entropy.
Bowen \cite{Bowen1973} introduced the notion of topological entropy for arbitrary subsets in a given dynamical system, which serves as the dynamical analogue of Hausdorff dimension. Later, Feng and Huang \cite{FengHuang} proposed  packing entropy as the dynamical counterpart of packing dimension, as well as upper capacity entropy defined via spanning sets and separated sets. In the same paper, they further introduced local measure-theoretic  entropy, and established variational principles linking local topological entropy (Bowen and packing) to local measure-theoretical entropy. Notably, for any subset, the packing entropy is bounded below by the Bowen entropy and above by the upper capacity entropy. Moreover, these three quantities coincide for any invariant subsets.

For a non-empty compact set $K\subset X$, denote by $\M(K)$ the set of Borel probability measures supported on $K$. As $K$ is not necessarily $T$-invariant, the pair $(T_*,\M(X))$ need not be a dynamical system. 
The significance of the Feng--Huang variational principles lies in the fact that, from the local perspective, the topological complexity of a subset is characterized by the complexity of those measures supported on it. This idea motivates us to investigate the complexity of an arbitrary compact susbet and of the Borel probability measures supported on it.
In particular, it is natural to ask whether one can relate the Bowen, packing and upper capacity entropies of $K$ to the corresponding entropies of $\M(K)$.

Our first result establishes a dichotomy for the upper capacity entropy of $\M(K)$. 
\begin{theorem}
	\label{thm:intro-upper-capacity}
	Let $(X,T)$ be a  topological dynamical system.
	If $K\subset X$  is non-empty and compact, the following statements hold:
	\begin{align*}
		& \hUC(T,K)=0 \iff \hUC(T_*,\mathcal{M}(K))=0, \\
		& \hUC(T,K)>0\iff \hUC(T_*,\mathcal{M}(K))=+\infty.
	\end{align*}

\end{theorem}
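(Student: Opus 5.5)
Since the entropies lie in $[0,+\infty]$, the two equivalences follow from three implications: (i) $\hUC(T,K)>0 \Rightarrow \hUC(T_*,\M(K))=+\infty$; (ii) $\hUC(T_*,\M(K))=0\Rightarrow$ wait, more precisely (ii) $\hUC(T,K)=0\Rightarrow \hUC(T_*,\M(K))=0$. Indeed, (ii) gives the forward direction of the first line. The backward direction of the first line is the contrapositive of (i), because $0\ne+\infty$. For the second line, the forward direction is (i), and the backward direction is the contrapositive of (ii).

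We use the upper capacity entropy defined through $(n,\varepsilon)$-separated subsets of $K$: $\hUC(T,K)=\lim_{\varepsilon\to0}\limsup_n \frac1n\log s_n(K,\varepsilon)$. On $\M(X)$ we fix a metric compatible with the weak$^*$ topology, such as $D(\mu,\nu)=\sum_j 2^{-j}|\int f_j\,d\mu-\int f_j\,d\nu|$ or the Prokhorov metric. The induced map satisfies $T_*^i\mu=\mu\circ T^{-i}$, and $\M(K)$ is compact.

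For (i), we adapt Bauer--Sigmund to finite-time orbits. Assume $\hUC(T,K)>0$, so there are $\varepsilon>0$, $a>0$ and $n_k\to\infty$ with $(n_k,\varepsilon)$-separated sets $E_k\subset K$ of cardinality at least $e^{an_k}$. Fix $m\in\N$. Consider the measures $\frac1m\sum_{j=1}^m\delta_{x_j}$ with $x_j\in E_k$. For a suitable $\delta=\delta(\varepsilon,m)>0$ we want a family of size roughly $\binom{|E_k|}{m}/C^{n_k}$ that is $(n_k,\delta)$-separated for $T_*$. The mechanism is as follows. If two such measures are $\delta$-close at every time $i<n_k$, then at each time the atoms of one must be matched with the atoms of the other up to an $\varepsilon/2$ error, except for a fraction that is small in terms of $\delta m$. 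So the $m$-point configurations are "almost the same" up to mislabeling at each time. The cleanest route is to avoid the $\varepsilon/2$-cover argument and work in the product system $(X^m,T^{\times m})$ modulo permutations. The set $E_k^m$ is $(n_k,\varepsilon)$-separated in $X^m$ with the max metric. By a counting (pigeonhole) argument, for each time $i$ an unordered $m$-tuple has at most $m!$ matching patterns, and only boundedly many patterns change along the orbit. This should give $s_{n_k}(\M(K),\delta)\ge e^{(ma-o(1))n_k}$ for $\delta$ small, up to a subexponential loss. Then $\hUC(T_*,\M(K))\ge ma$ for every $m$, which gives $+\infty$. I expect this counting step to be the main obstacle. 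A patch if it fails: first pass to an $(n_k,\varepsilon)$-separated set inside a finite cover by $\varepsilon/4$-balls, code points by their itineraries in a partition, and use the Wasserstein distance. Two empirical measures are then $\delta$-close at time $i$ only if their itinerary histograms nearly agree. The number of measures sharing a given sequence of histograms is at most about $\binom{m+N}{N}^{\,n_k}$ times a subexponential factor, while the number of histogram sequences realized is at least about $e^{man_k}$ divided by this. Choosing $N$ fixed and $m$ large still gives growth $\gtrsim (ma - m\cdot\text{small})$, since $\log\binom{m+N}{N}$ grows only like $N\log m$.

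For (ii), assume $\hUC(T,K)=0$, fix $\delta>0$, and aim to show $\limsup_n\frac1n\log s_n(\M(K),\delta)\le\eta$ for arbitrary $\eta>0$. Choose $\varepsilon$ small relative to $\delta$. Let $F_n$ be an $(n,\varepsilon)$-spanning set of $K$ with $|F_n|\le e^{\eta' n}$ for large $n$, where $\eta'$ is small; this is possible because the upper capacity entropy is zero for this $\varepsilon$. Partition $K$ into the Bowen-ball cells of $F_n$. Any $\mu\in\M(K)$ is within $\delta/2$ of a measure $\sum_{y\in F_n}p_y\delta_y$ along times $0,\dots,n-1$ in the Wasserstein (or Prokhorov) metric, with weights $p_y=\mu(\text{cell}_y)$. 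Rounding the weights to multiples of $1/L$, with $L\approx 2/\delta$, costs a further total-variation error of size $|F_n|/L$, which is too large. Instead we round only up to an $\ell^1$ error of $\delta/4$, which takes at most $\binom{|F_n|+L'}{L'}$ choices with $L'=\lceil 4/\delta\rceil$. Then $\log\binom{|F_n|+L'}{L'}\le L'\log(|F_n|+L')\le L'(\eta' n+O(1))$. Hence an $(n,\delta)$-spanning set for $\M(K)$ has exponential rate at most $L'\eta'$. Letting $\eta'\to0$ first, with $\delta$ fixed, gives zero, and then letting $\delta\to0$ gives $\hUC(T_*,\M(K))=0$. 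This direction is routine once the metric estimates $W(T_*^i\mu,T_*^i\nu)\le\varepsilon+\mathrm{diam}(X)\cdot\|\mu-\nu\|_{\TV}$ are recorded.
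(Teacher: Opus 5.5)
Your reduction to (i) and (ii) is logically fine, but (ii) has a fatal gap. It is not routine: it is the local form of the Glasner--Weiss theorem, and your rounding step is false. Probability vectors on $M=|F_n|$ points with entries in $\frac{1}{L'}\mathbb{Z}$ are supported on at most $L'$ points. So the uniform vector $p_y=1/M$ lies at $\ell^1$-distance at least $2(1-L'/M)$ from every one of them, and they do not form a $\delta/4$-net once $M\gg L'$. No cleverer choice helps. By a volume comparison, any $\ell^1$-net of the $(M-1)$-simplex at scale $\delta/4$ has at least $(2/\delta)^{M-1}$ elements. So as soon as $|F_n|$ grows superlinearly in $n$ (which zero upper capacity entropy allows), the logarithm of the count is superlinear and you get no bound at all.

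Switching from total variation to the weak$^*$ metric at each of the $n$ times does not rescue a naive argument either. For instance, sampling $k$ atoms with a union bound over the $nL$ test-function/time pairs forces $k\gtrsim\log(nL)/\delta^2$ and loses a factor $\log n$ in the exponent. The missing ingredient is the Glasner--Weiss combinatorial lemma. The paper proves the contrapositive:
\begin{itemize}
\item it takes an $(n,\varepsilon)$-separated $E_n\subset\M(K)$ with $|E_n|>e^{an}$;
\item it partitions $K$ into cells $K_i$ subordinate to an $(n,\delta)$-spanning set $\{z_1,\dots,z_{M_n}\}$;
\item it checks that the norm-one linear map $\ell_1^{M_n}\to\ell_\infty^{nL}$, $v\mapsto\bigl(2^{-\ell}\sum_i v_i g_\ell(T^jz_i)\bigr)_{\ell\le L,\,j<n}$, sends the vectors $(\nu(K_i))_i$, $\nu\in E_n$, to $\varepsilon 2^{-L-4}$-separated points.
\end{itemize}
The lemma then forces $M_n\ge 2^{c_0nL}$, i.e.\ $\hUC(T,K)>0$. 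Without this, or some equivalent nontrivial combinatorial input, (ii) remains unproved.

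Part (i) is also not actually carried out. With equal weights $\frac1m\sum_j\delta_{x_j}$ the atoms are unlabeled. The matching between two configurations that stay close can switch whenever two atoms come within $\varepsilon'$ of each other, which in general happens at a positive proportion of times. Your claim that only boundedly many patterns change is therefore unjustified. The counting you describe, including the histogram-sequence patch, only bounds the number of configurations compatible with a given one by something of order $(m!)^{n}$. That yields the rate $ma-\log m!$, which is useless for large $m$ when $a$ is small.

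The paper sidesteps this with dyadic weights $a_i=2^{i-1}/(2^m-1)$. Then $\Phi_m(x_1,\dots,x_m)=\sum_i a_i\delta_{x_i}$ is injective, since the mass at each point determines which indices sit there. Hence $\Phi_m$ is a topological embedding of $X^m$ into $\M(X)$ with $T_*\circ\Phi_m=\Phi_m\circ T^{\times m}$ and $\Phi_m(K^m)\subset\M(K)$. This gives $\hUC(T_*,\M(K))\ge\hUC(T^{\times m},K^m)\ge m\,\hUC(T,K)$, because $E^m$ is $(n,r)$-separated for the max metric whenever $E$ is.
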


Our second result is concerned with the packing entropy of $K$ and $\M(K)$.

\begin{theorem}
	\label{thm:intro-packing}
	Let $(X,T)$ be a topological dynamical system. 
	If $K\subset X$  is non-empty and compact, the following equivalence holds:
	\begin{align*}
		\hP(T,K)>0\iff \hP(T_*,\mathcal{M}(K))>0.
	\end{align*}
	
\end{theorem}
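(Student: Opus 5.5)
The plan is to prove the two implications separately. The forward direction uses a Feng--Huang type variational principle together with the Dirac embedding. The reverse direction proceeds by contraposition and reduces zero packing entropy of $K$ to zero upper capacity entropy of compact pieces of $K$.

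\emph{Forward direction.} Assume $\hP(T,K)>0$. The map $\delta\colon X\to\M(X)$, $x\mapsto\delta_x$, is an isometric embedding, provided we fix a metric on $\M(X)$ for which $d(\delta_x,\delta_y)=d(x,y)$ (for instance a bounded Lipschitz/Kantorovich metric, after replacing $d$ by $\min\{d,1\}$). It is equivariant, $T_*\delta_x=\delta_{Tx}$, and maps $K$ into $\M(K)$. Hence Bowen balls in $X$ correspond exactly to Bowen balls of $\M(X)$ intersected with $\delta(X)$. Packing entropy is monotone under inclusion and is invariant under such an equivariant isometric embedding. Therefore
\[
\hP(T_*,\M(K))\ge \hP(T_*,\delta(K))=\hP(T,K)>0 .
\]
The only point needing care is that the metric on $\M(X)$ restricts to a metric on $\delta(X)$ that generates the same Bowen-ball structure. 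I expect a uniform equivalence of metrics on $\delta(X)$ to suffice, since packing entropy of a subset does not change under a uniformly equivalent metric.

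\emph{Reverse direction.} I would show $\hP(T,K)=0\Rightarrow \hP(T_*,\M(K))=0$ in three steps.
\begin{enumerate}
\item Use the standard characterization (Feng--Huang) of packing entropy as a modified upper capacity entropy:
\[
\hP(T,Z)=\inf\Bigl\{\sup_i \hUC(T,Z_i): Z\subset\textstyle\bigcup_i Z_i\Bigr\}.
\]
Since $K$ is compact, a Baire category argument lets us take the $Z_i$ to be compact subsets $K_i\subset K$ with $\hUC(T,K_i)<\varepsilon$. We may also take closures, because upper capacity entropy is unchanged under closure.
\item Decompose $\M(K)$. Fix $\varepsilon>0$ and choose compact $K_i$ with $K=\bigcup_i K_i$ and $\hUC(T,K_i)<\varepsilon$; making the $K_i$ increasing is fine. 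For a measure $\mu\in\M(K)$ we cannot place it inside a single $\M(K_i)$. Instead we cover $\M(K)$ by the countably many sets
\[
\mathcal A_{i,m}=\{\mu\in\M(K):\mu(K_i)\ge 1-1/m\}.
\]
Every $\mu$ lies in $\mathcal A_{i,m}$ for suitable $i$, for each $m$, because $\mu(K_i)\to1$.
\item Estimate each piece. We need $\hUC(T_*,\mathcal A_{i,m})$ small. By Theorem~\ref{thm:intro-upper-capacity} this is already delicate: if $\hUC(T,K_i)>0$ then $\hUC(T_*,\M(K_i))=+\infty$. So choosing $K_i$ with merely small positive entropy does not work. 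We need $\hUC(T,K_i)=0$, which is available only if the infimum in step~1 is attained at $0$. Since $\hP(T,K)=0$, for every $\varepsilon$ we get covers by sets of entropy $<\varepsilon$, but not necessarily exactly $0$.
\end{enumerate}

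The main obstacle is step~3, and I would handle it by a diagonal choice. Choose covers $K=\bigcup_i K^{(n)}_i$ with $\hUC(T,K^{(n)}_i)<1/n$. Then form the sets
\[
\bigcap_n K^{(n)}_{i_n},\qquad (i_n)\in\N^{\N}.
\]
These sets have zero upper capacity entropy, but they are uncountably many. So the more realistic route is to prove directly a quantitative version of Theorem~\ref{thm:intro-upper-capacity}: if $\hUC(T,L)\le\eta$, then $\hUC(T_*,\M(L))$ at scale $\epsilon$ is at most $C(\epsilon)\,\eta$. This bound would come from the covering estimate of Bauer--Sigmund/Glasner--Weiss, in which measures are approximated by empirical measures on $(n,\epsilon)$-spanning sets, giving $\#\text{spanning}(\M(L))\lesssim \binom{N+k}{k}$ with $N=e^{n\eta}$. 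One then checks that the mass outside $K_i$, at most $1/m$, perturbs distances by $O(1/m)$. Finally, summing over $\epsilon$ and letting $n\to\infty$ gives $\hP(T_*,\M(K))=0$. I would try this quantitative estimate first.
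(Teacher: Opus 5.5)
\textbf{Forward direction.} This part is fine. The paper gets $\hP(T,K)\le\hP(T_*,\M(K))$ through Theorem~\ref{lem:dirac-lifting} and the variational principle. Your isometric-embedding argument also works, because for disjointness purposes ambient and intrinsic closed Bowen balls differ only by a factor $2$ in the radius.

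\textbf{The gap.} It sits in the reverse direction, exactly at the step you flag. You propose to show that $\hUC$ of $\M(Y)$ at scale $\epsilon$ is at most $C(\epsilon)\eta$ when $\hUC(T,Y)\le\eta$, using empirical measures with a fixed number $k$ of atoms from a spanning set. That mechanism does not work. Being $\epsilon$-close in $D_n^*$ means matching the $nL$ numbers $\int g_\ell\circ T^j\,d\mu$ ($\ell\le L$, $j<n$) simultaneously. Sampling plus a union bound gives this only with $k\gtrsim\epsilon^{-2}\log(nL)$, and nothing in your sketch gives $k$ independent of $n$. With $k\sim\log n$ and $N=e^{n\eta}$, the count $\binom{N+k}{k}$ is of order $e^{c\eta n\log n}$. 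That is superexponential, so it bounds nothing.

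Removing this logarithmic loss is precisely what the Glasner--Weiss Lemma~\ref{lemma GlasnerWeiss} does. What it yields is not a linear bound but a threshold statement: for all $\epsilon,b>0$ there are $\delta,c>0$ such that $\limsup_n\frac1n\log r_n(T,Y,\delta)<c$ implies $\limsup_n\frac1n\log s_n(T_*,\M(Y),\epsilon)\le b$. The paper extracts exactly this in the proofs of Theorem~\ref{thm:upper-converse} and Lemma~\ref{thm:local-to-capacity}. It partitions $Y$ along an $(n,\delta)$-spanning set and sends $\nu\mapsto(\nu(Y_i))_i$ through $\Phi_n\colon\ell_1^{M_n}\to\ell_\infty^{nL}$.

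\textbf{Step 1.} As you state it, with the scale-limit $\hUC$ inside the supremum, the definition of $\hP$ does not deliver it. For each fixed $r$ it only gives a cover $K=\bigcup_i Z_i$ with $\limsup_n\frac1n\log s_n(T,Z_i,2r)$ small. (A maximal $(n,2r)$-separated subset of $Z_i$ gives disjoint closed $r$-balls.) That cover depends on $r$, which is your diagonal problem.

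You do not need more than this. The threshold estimate involves $K_i$ only at the single scale $\delta=\delta(\epsilon/2)$. So fix $\epsilon$ and $b$ first, then $\delta$ and $c$, and only then choose the cover. No Baire argument is needed, since closures and finite unions preserve fixed-scale spanning bounds.

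\textbf{Finishing the argument.} Take $m\ge 8/\epsilon$. Conditioning on $K_i$ moves each $\mu\in\mathcal A_{i,m}$ by at most $2/m$ in every $D_n^*$, since total variation does not increase under $T_*$. So $(n,\epsilon)$-separated subsets of $\mathcal A_{i,m}$ inject into $(n,\epsilon/2)$-separated subsets of $\M(K_i)$, which gives $\limsup_n\frac1n\log s_n(T_*,\mathcal A_{i,m},\epsilon)\le b$. At a fixed radius, $\hP(T_*,Z,\epsilon)\le\sup_i\limsup_n\frac1n\log s_n(T_*,Z_i,\epsilon)$ for any countable cover, because centres of disjoint closed $(n,\epsilon)$-balls are $(n,\epsilon)$-separated. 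Hence $\hP(T_*,\M(K),\epsilon)\le b$ for every $b>0$ and every $\epsilon$.

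\textbf{Comparison with the paper.} Repaired this way, your route is a genuine alternative. The paper applies the Feng--Huang variational principle to $\M(K)$ and passes to the barycentre $\bar\mu$, which has $\overline h_{\bar\mu}(T)=0$. It then uses Lemma~\ref{lem:large-core} to find one set $A$ of small capacity carrying most of the mass. Your countable family $\mathcal A_{i,m}$ replaces that measure-theoretic selection. It also avoids the variational principle entirely in the reverse direction.
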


The proof is strikingly different from that of Glasner and Weiss \cite{GlasnerWeiss} and highly depends on the measure-theoretical local entropy introduced by Feng and Huang \cite{FengHuang}. We also apply the Feng--Huang variational principle to $K$ and $\M(K)$ alternately.

The third result on Bowen entropy exhibits a unexpected phenomenon.

\begin{theorem}
	\label{thm:intro-bowen}
	Let $(X,T)$ be a  topological dynamical system.
	If $K\subset X$  is non-empty,  then  
	\[
	\hB(T,K)>0 \implies \hB(T_*,\mathcal{M}(K))=+\infty.
	\]
	Furthermore, there exist a topological dynamical system $(X',T')$ and a non-empty compact $K'\subset X'$ such that
	\[
	\hB(T',K')=0
	\  \text{whereas}\ 
	\hB(T'_*,\M(K'))=+\infty.
	\]
\end{theorem}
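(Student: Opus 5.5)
The first assertion will follow from a Bauer--Sigmund type amplification carried out inside $\M(K)$. The counterexample will be built from a compact set that is Bowen-small but packing-large.

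\textbf{Positive entropy forces infinite entropy.} Suppose $\hB(T,K)>0$, where $K$ is any non-empty set. By Feng--Huang's lower bound for Bowen entropy (the measure-theoretic version of Frostman's lemma), there are a compact $K_0\subset K$ and a measure $\mu$ with $\mu(K_0)=1$ satisfying
\[
\mu\bigl(B_n(x,\varepsilon)\bigr)\le e^{-ns}
\]
for all $x$ and all large $n$, for some fixed $\varepsilon,s>0$.

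\begin{enumerate}[label=(\roman*)]
\item For each $k$, I will consider the compact family of measures
\[
\Phi_k(\underline x)=\frac1k\sum_{i=1}^k\delta_{x_i}\in\M(K_0)\subset\M(K).
\]
This is the standard Bauer--Sigmund device.
\item I will metrize $\M(X)$ by a metric adapted to a finite open cover of mesh $\varepsilon$, or use the Kantorovich metric. Then two such empirical measures stay far apart along $T_*$-Bowen balls whenever many coordinates are $\varepsilon$-separated in the $T$-Bowen metric.
\item Pushing forward $\mu^{\otimes k}$ under $\Phi_k$ gives a measure $\nu_k$ on $\M(K)$. Using the product estimate $\mu^{\otimes k}$-mass of $\prod B_n(x_i,\varepsilon)\le e^{-nks}$, I expect the $\nu_k$-mass of a $T_*$-Bowen ball of radius $\delta_k$ to be at most
\[
e^{-n(cks-\log C_k)},
\]
where $C_k$ is a combinatorial factor coming from permutations and from the fraction of coordinates allowed to be close.
\item The mass distribution principle (Bowen version) then gives $\hB(T_*,\M(K))\ge cks-o(k)\to\infty$.
\end{enumerate}

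The main obstacle is step (iii). A ball in the weak$^*$ metric around $\Phi_k(\underline x)$ does not force each $y_i$ to lie near some $x_{\sigma(i)}$ for all times simultaneously, because the matching may change with $n$. I will control this with the usual trick: fix a finite partition of $X$ into sets of small diameter with $\mu$-null boundaries, and encode $T_*^j\Phi_k(\underline y)$ by its rounded histogram over partition cells. Closeness of the measures then forces agreement of histograms up to $\delta$, and counting admissible histogram sequences costs at most $e^{n\,O(\delta k\log k)}$. This is absorbed by choosing $\delta$ small relative to $s$.

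\textbf{The counterexample.} For $(X',T')$ I take the full shift on two symbols, so $T'$ is the shift map. I let $K'$ be a compact, non-invariant set of sequences that are free on a sparse set of coordinates $\bigcup[N_j,2N_j)$ with $N_{j+1}\gg 2N_j$, and zero elsewhere. Bowen entropy uses $\liminf$-type coverings, so it sees the long zero stretches; hence $\hB(T',K')=0$.

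On $\M(K')$, however, the Bowen entropy should be infinite. The plan is to exploit that $\M(K')$ contains the convex hull of such points: a compact subset of measures that is homogeneous enough for the Bowen entropy to be large at every scale. Concretely, I will embed a Cantor set of measures whose $T_*$-orbits separate at every time $n$. The idea is that mixtures spread mass over shifted blocks: measures $\sum_j 2^{-j}\lambda_j$, with $\lambda_j$ uniform on patterns in block $j$, differ at every time window through some block. Then the mass distribution principle on this family gives $\hB(T'_*,\M(K'))=+\infty$.

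I expect this construction, in particular verifying uniform separation at all times $n$, to be delicate. The tail weights $2^{-j}$ must stay above the metric resolution, so the block sizes and the weight sequence have to be tuned together.
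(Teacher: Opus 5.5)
\textbf{First assertion.} Step (iii) is a genuine gap, and the equal weights you choose in (i) create it. Because $\frac1k\sum_i\delta_{x_i}$ forgets labels, $D^*_n\bigl(\Phi_k(\underline x),\Phi_k(\underline y)\bigr)<\delta$ only gives, separately for each $j<n$, a matching $\sigma_j$ with $T^jy_{\sigma_j(i)}$ close to $T^jx_i$. The matching $\sigma_j$ may switch whenever two of the points $T^jx_i$ come close, and for non-invariant $\mu$ this need not be rare. A union bound over $(\sigma_j)_{j<n}$ then costs up to $(k!)^n$, which swamps $e^{-nks}$.

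The histogram device does not resolve this, for two reasons.
\begin{itemize}
\item Null boundaries for $\mu$ give no control, uniform in $j$, over how much mass $T^j_*\mu$ puts near $\partial\mathcal P$. So weak$^*$ closeness need not force agreement of histograms.
\item More seriously, a histogram sequence records only one-time marginals. It forgets how each $y_c$ is threaded through time, which is exactly the information the matching loses. The set of $\underline y$ with prescribed histograms $h_0,\dots,h_{n-1}$ is a union of about $\exp\bigl(k\sum_{j<n}H(h_j)\bigr)$ products of $n$-cylinders, where $H$ is Shannon entropy. Each product has mass at most about $e^{-nks}$, but $\sum_{j<n}H_{T^j_*\mu}(\mathcal P)$ may be much larger than $ns$. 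So counting histogram sequences, at whatever cost, does not bound the mass of the ball.
\end{itemize}

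The remedy is the paper's: take weights with distinct subset sums, $a_i=2^{i-1}/(2^k-1)$. Then $\Phi(\underline x)=\sum_ia_i\delta_{x_i}$ is injective, hence a topological embedding with $T_*\circ\Phi=\Phi\circ T^{\times k}$ (Lemma~\ref{lem:atomic-embedding}). Uniform continuity of $\Phi^{-1}$ on $\Phi(X^k)$ yields $\delta>0$ with $\Phi^{-1}\bigl(B^*_{D,n}(\Phi(\underline x),\delta)\bigr)\subset\prod_iB_n(x_i,\varepsilon)$ for every $n$. Your product estimate and the mass distribution principle then give $\hB(T_*,\M(K))\ge ks$ with no combinatorial loss. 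The paper packages this as $\hB(T_*,\M(K))\ge\hB(T^{\times m},K^m)\ge m\,\hB(T,K)$, with the second inequality proved via $\mu^{\otimes m}$ and Theorem~\ref{thm:feng-huang} (Proposition~\ref{prop:bowen-product}). Note also that Feng--Huang supply a compact $K_0\subset K$ of positive Bowen entropy only for analytic $K$; the paper's argument is for compact $K$.

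\textbf{Counterexample.} Your $K'$ cannot work for any choice of parameters. Write $(X,T)$ for the full shift and $K_J=\{x:x_r=0\text{ for }r\notin J\}$ for $J\subset\Nzero$; your $K'$ is $K_J$ with $J=\bigcup_j[N_j,2N_j)$. For every such $J$, $\hB(T,K_J)=0$ already forces $\hB(T_*,\M(K_J))=0$:
\begin{itemize}
\item Given $\varepsilon>0$, choose $L$ so that every measure supported on $\{y:y_0=\dots=y_{L-1}=0\}$ is within $\varepsilon/2$ of $\delta_{0^\infty}$ in $D$.
\item If $[j,j+L)\cap J=\emptyset$, then $T^j_*\mu$ is such a measure for every $\mu\in\M(K_J)$. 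At most $L(|J\cap[0,n)|+L)$ times $j<n$ fail this.
\item Label $T^j_*\mu$ at those exceptional times by an $\varepsilon/2$-net of $\M(X)$ of size $N(\varepsilon)$. This covers $\M(K_J)$ by $N(\varepsilon)^{L(|J\cap[0,n)|+L)}$ balls $B^*_{D,n}(\cdot,\varepsilon)$.
\item Now take times $n$ with $|J\cap[0,n)|/n\to0$; this is exactly what $\hB(T,K_J)=0$ requires. Otherwise the uniform Bernoulli measure on the free coordinates gives $\hB(T,K_J)\ge\log2\cdot\liminf_n\frac1n|J\cap[0,n)|>0$. Along these times the cover yields $\Lambda^s=0$ for all $s>0$.
\end{itemize}
In particular, your mixtures $\sum_j2^{-j}\lambda_j$ lie in $\M(K')$, so they all sit near $\delta_{0^\infty}$ throughout the long gaps and cannot separate at every time.

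The missing idea is to glue two pieces that are active at complementary times. The paper takes complementary $J_0,J_1$, each of lower density zero (via $N_m=2^{2^m}$), and sets $K=K_{J_0}\cup K_{J_1}$. Countable stability gives $\hB(T,K)=0$. Meanwhile $\mu_p=\frac12\nu^{(0)}_p+\frac12\nu^{(1)}_p\in\M(K)$ has a free coordinate at every time, so with $g_1(x)=x_0$ one gets $D(T^j_*\mu_p,T^j_*\mu_q)\ge\frac14|p_j-q_j|$ for all $j$. Pushing forward the uniform product measure on $\{0,\frac1r,\dots,1\}^{\N}$ and applying Lemma~\ref{lemma:mass} gives $\hB(T_*,\M(K))\ge\log(r+1)$ for every $r$. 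Your intuition that convex combinations cause the jump is right. It pays off only because $\M(K_{J_0}\cup K_{J_1})$ is much larger than $\M(K_{J_0})\cup\M(K_{J_1})$, whereas $\hB$ of the union only sees the pieces.
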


The paper is organized as follows. In Section~\ref{sec:definitions}, we recall the definitions and basic properties of upper capacity, packing and Bowen topological entropies for arbitrary subsets, together with the corresponding variational principles for Bowen and packing  topological entropies. In Section~\ref{sec: proof of thm1.1}, we prove Theorem~\ref{thm:intro-upper-capacity}. In Section~\ref{sec:proof of thm1.2}, we prove Theorem~\ref{thm:intro-packing}. In Section~\ref{sec: proof of thm1.3}, we prove Theorem~\ref{thm:intro-bowen}, including the construction of an example showing that the converse implication for Bowen entropy fails in general.

\section{Preliminaries}
\label{sec:definitions}
 For clariﬁcation, throughout this paper by a {\bf topological dynamical system} (TDS for short) we mean a pair $(X, T )$, where $X$ is a compact metric space endowed with a metric $d$ and $T\colon X\to X$ be continuous.  We denote by $\N$ and $\N_+$ the sets of nonnegative integers and positive integers, respectively. For a non-empty  subset $K\subset X$, define
 $$
 \M(K):=\{\mu\in\mathcal{M}(X):\operatorname{supp}\mu\subset K\},
 $$
 the space of Borel probability measures supported on $K$. 
 \subsection{Induced systems}
  Let $(X,d)$ be a compact metric space, and let $\mathcal{M}(X)$ be the space of Borel probability measures on $X$ endowed with the weak*-topology. It is classical that $\mathcal{M}(X)$ is  compact and metrizable \cite[Theorem 6.4]{Par67}. Fix a sequence $\{g_\ell\}_{\ell\ge1}$  dense in the closed unit ball of $C(X)$, and define a  compatible metric $D$ on $\mathcal{M}(X)$ by
  \begin{equation}\label{eq:weak* metric}
  D(\mu,\nu):=\sum_{\ell=1}^{\infty}2^{-\ell}\left|\int_X g_\ell\,d\mu-\int_X g_\ell\,d\nu\right|, \quad\forall \mu,\nu\in M(X).
  \end{equation}
  
The  push-forward map $T_*:\mathcal{M}(X)\to\mathcal{M}(X)$  is defined by $T_*\mu=\mu \circ  T^{-1}$, which is continuous with respect to the weak-$*$ topology on $\mathcal{M}(X)$. The topological system $(\mathcal{M}(X), T _* ) $ is called the {\bf induced system} of $(X, T)$ on probability measures.

For each $n\in\mathbb{N}$, the {\bf $n$-th Bowen metrics} $d_n$ on $X$ and $D^*_n$ on $\mathcal{M}(X)$ are defined, respectively, by
\begin{align*}
	d_n(x,y) &:= \max_{0\leq j<n}d(T^jx,T^jy),\quad\forall x,y\in X;\\
	D_n^*(\mu,\nu) &:= \max_{0\leq j<n}D(T_*^j\mu,T_*^j\nu),\quad\forall \mu,\nu\in M(X).
\end{align*}
For every $r> 0$ we denote by $B_n (x, r)$, $\overline{B}_n(x,r)$ the open (resp. closed) ball of radius $r$ in the metric $d_n$ around $x$, i.e.
\[
B_n(x,r):=\{y\in X : d_n(x,y)<r\},\quad
\overline{B}_n(x,r):=\{y\in X : d_n(x,y)\le r\}.
\]
While $B_{D,n}^*(\mu,r)$ denotes the open ball of radius $r$ in the metric $D^*_n$ around $\mu$, i.e.
\[
B_{D,n}^*(\mu,r)=\{ \nu\in \mathcal{M}(X): D_n^*(\mu,\nu)<r\}.
\]
\subsection{Upper capacity  topological entropy}
In this subsection, we adopt the definition of upper capacity  topological entropy due to Feng and Huang~\cite[Section~2.1]{FengHuang}. 

Let $Z\subset X$ be a non-empty set. For $n\in\N$ and $r>0$, a set $E\subset Z$ is called a {\bf $(n,r)$-separated set} of $Z$, if $x,y\in E,\ x\ne y$ implies $d_n(x,y)>r$; a set $F\subset X$ is called a {\bf  $(n,r)$-spanning set} for $Z$,  if for any
$x\in Z$, there exists $y\in F$ satisfies $d_n(x,y)\le r$.  Denote by
$s_n(T,Z,r)$ the largest cardinality of a $(n,r)$-separated subset of
$Z$, and by $r_n(T,Z,r)$ the smallest cardinality of a $(n,r)$-spanning
set for $Z$.  Both numbers are finite since $(X,d_n)$ is compact.
The {\bf upper capacity topological entropy }of $Z$ is defined as
\[
\hUC(T,Z):=\lim_{r\to0}\limsup_{n\to\infty}
\frac1n\log r_n(T,Z,r)=\lim_{r\to0}\limsup_{n\to\infty}
\frac1n\log s_n(T,Z,r).
\]

Although we use separated and spanning sets to define the upper-capacity topological entropy, the resulting quantity agrees with its open-cover counterpart. This follows from the standard comparison between separated, spanning, and open-covering numbers \cite{AKM65,FengHuang,Walters}.

\subsection{Packing  topological entropy}
Packing topological entropy for arbitrary subsets and its fundamental properties were introduced by Feng and Huang~\cite[Section~2.3]{FengHuang}, and it is a dynamical analogue of packing dimension.

Let $Z\subset X$ be a non-empty set. 
For $s\geq0$, $N\in\N$, and $r>0$, let
\[
P_{N,r}^s(T,Z):=\sup\sum_i e^{-sn_i},
\]
where the supremum is taken over all finite or countable pairwise disjoint families $\{\overline B_{n_i}(x_i,r)\}_i$ with $x_i\in Z$ and
$n_i\geq N$ for all $i\in\N$. 

Since $P_{N,r}^s(T,Z)$ is non-increasing in $N$, hence the following limit exists:
\[
P_r^s(T,Z):=\lim_{N\to\infty}P_{N,r}^s(T,Z).
\]
Define
\[
\cP_r^s(T,Z):=\inf\left\{\sum_{k=1}^\infty P_r^s(T,Z_k):
Z\subset\bigcup_{k=1}^\infty Z_k\right\},
\]
where the infimum is taken over all countable covers
$\{Z_j\}_{j\ge1}$ of $Z$. There exists a critical value $\hP(T,Z,r)\in[0,+\infty]$ such that
\[
 \cP_r^s(T,Z)
=
\begin{cases}
	+\infty, & 0\le s<\hP(T,Z,r);\\
	0,       & s>\hP(T,Z,r).
\end{cases}
\]
The {\bf packing topological entropy of $Z$} is defined as
\[
\hP(T,Z)
:=
\lim_{r\to 0}\hP(T,Z,r).
\]
The limit exists in $[0,+\infty]$, since $\hP(T,Z,r)$ is non-decreasing  as $r\to 0$.

\subsection{Bowen topological entropy}

Bowen topological entropy for arbitrary subsets was introduced by
Bowen~\cite{Bowen1973}, and it is a dynamical analogue of Hausdorff
dimension. Its basic properties and variational principles were further
studied by Feng and Huang~\cite{FengHuang}.

Let $Z\subset X$ be a non-empty set. For $s\geq0$, $N\in\N$, and
$r>0$, define
\[
\Lambda_{N,r}^s(T,Z)
:=
\inf\left\{
\sum_i e^{-sn_i}:
Z\subset\bigcup_i B_{n_i}(x_i,r),\quad
x_i\in X,\quad n_i\geq N
\right\},
\]
where the infimum is taken over all finite or countable families of
Bowen balls $\{B_{n_i}(x_i,r)\}_i$ covering $Z$.

Since $\Lambda_{N,r}^s(T,Z)$ does not decrease as $N\to\infty$ and  $r\to 0$, the following two limits exists:
\[
\Lambda_r^s(T,Z)
:=
\lim_{N\to\infty}\Lambda_{N,r}^s(T,Z),\qquad \Lambda^s(T,Z)
:=
\lim_{r\to 0}\Lambda_{r}^s(T,Z),
\]
 The {\bf Bowen topological entropy of $Z$}  is defined as the critical value $\hB(T,Z)\in[0,+\infty]$ such that
\[
\Lambda^s(T,Z)
=
\begin{cases}
	+\infty, & 0\leq s<\hB(T,Z),\\
	0,       & s>\hB(T,Z).
\end{cases}
\]
The {\bf Bowen topological entropy} of $Z$ is defined as
\[
\hB(T,Z)
:=
\lim_{r\to0}\hB(T,Z).
\]
\subsection{Basic properties and variational principles for local etropies}For every non-empty subset  $Z\subseteq X$, its Bowen, packing, and upper-capacity topological entropies are independent of the choice of a compatible metric on the compact space  $X$.
As shown by Feng and Huang \cite[Proposition~2.1]{FengHuang}, these subset entropies satisfy several fundamental properties.
\begin{prop}
	\label{prop:entropy-properties}
	The following statements hold:
	
	\begin{enumerate}[label=(\roman*)]
		\item If $Z\subseteq Z'$, then
		\[
		\hUC(T,Z)\le \hUC(T,Z'),\
		\hB(T,Z)\le \hB(T,Z'),\
		\hP(T,Z)\le \hP(T,Z').
		\]
		
		\item For $Z\subseteq \bigcup_{i=1}^{\infty}Z_i$,  we have
		\[
		\hB(T,Z)\le \sup_{i\ge 1}\hB(T,Z_i).\quad
		\]
		
		\item For any $Z\subseteq X$,
		\[
		\hB(T,Z)\le \hP(T,Z)\le \hUC(T,Z).
		\]
		
		\item Furthermore, if $Z$ is $T$-invariant and compact, then
		\[
		\hB(T,Z)=\hP(T,Z)=\hUC(T,Z).
		\]
	\end{enumerate}
\end{prop}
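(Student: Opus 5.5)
We verify each item from the definitions in Section~\ref{sec:definitions}. Only (iv) needs real dynamical input; (i)--(iii) are comparisons between covering, packing and separating numbers.

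For (i), monotonicity is immediate from the definitions. An $(n,r)$-separated subset of $Z$ is also one of $Z'$, so $s_n(T,Z,r)\le s_n(T,Z',r)$. A disjoint family of balls centred in $Z$ is admissible for $Z'$, so $P^s_{N,r}(T,Z)\le P^s_{N,r}(T,Z')$. Every countable cover of $Z'$ also covers $Z$, so $\cP^s_r(T,Z)\le \cP^s_r(T,Z')$ and $\Lambda^s_{N,r}(T,Z)\le\Lambda^s_{N,r}(T,Z')$. Passing to critical values and limits gives the three inequalities.

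For (ii), the key point is that $\Lambda^s_{N,r}(T,\cdot)$ is countably subadditive: concatenating near-optimal covers of the $Z_i$ gives a cover of $Z$. This survives the limit in $N$. Fix $s>\sup_i\hB(T,Z_i)$. Then $\Lambda^s(T,Z_i)=0$ for every $i$. Since $\Lambda^s_r\le\Lambda^s$, we get $\Lambda^s_r(T,Z)\le\sum_i\Lambda^s_r(T,Z_i)=0$ for every $r$, hence $\Lambda^s(T,Z)=0$ and $\hB(T,Z)\le s$.

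For (iii), first inequality: I would prove $\Lambda^s_{3r}(T,Y)\le P^s_r(T,Y)$ for every $Y$. Fix $N$ and take a maximal disjoint family $\{\overline B_N(y_i,r)\}$ with $y_i\in Y$; it is countable since $X$ is separable. By maximality, each $y\in Y$ satisfies $d_N(y,y_i)\le 2r$ for some $i$, so the open balls $B_N(y_i,3r)$ cover $Y$. Hence $\Lambda^s_{N,3r}(T,Y)\le\sum_i e^{-sN}\le P^s_{N,r}(T,Y)$. Let $N\to\infty$, apply this to each piece $Z_k$ of an arbitrary countable cover, and use subadditivity from (ii). This gives $\Lambda^s_{3r}(T,Z)\le\cP^s_r(T,Z)$. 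If $s>\hP(T,Z)$ then $\cP^s_r(T,Z)=0$ for all small $r$, so $\Lambda^s(T,Z)=0$ and $\hB\le\hP$.

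For (iii), second inequality: fix $s>t>\hUC(T,Z)$ and $r>0$. Using the trivial cover $\{Z\}$, it suffices to show $P^s_r(T,Z)=0$. In a disjoint family $\{\overline B_{n_i}(x_i,r)\}$, the centres with a common value $n_i=n$ form an $(n,r)$-separated subset of $Z$. Since $\limsup_n\frac1n\log s_n(T,Z,r)\le\hUC(T,Z)<t$ for every $r$, there are at most $s_n(T,Z,r)\le e^{tn}$ such centres for $n$ large. Hence $P^s_{N,r}(T,Z)\le\sum_{n\ge N}e^{(t-s)n}\to0$, so $\hP(T,Z,r)\le s$ for every $r$.

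For (iv), by (iii) it remains to show $\hUC(T,Z)\le\hB(T,Z)$ when $Z$ is compact and invariant. Here $\hUC(T,Z)=\htop(T|_Z)$ by the classical Bowen--Dinaburg description. Given $\varepsilon>0$, the variational principle yields an ergodic $\mu$ supported on $Z$ with $h_\mu(T)>\htop(T|_Z)-\varepsilon$. By the Brin--Katok formula and Egorov's theorem, there are $r>0$, $N_0$ and a set $A$ with $\mu(A)>1/2$ such that $\mu(B_n(x,r))\le e^{-n(h_\mu-2\varepsilon)}$ for all $x\in A$ and $n\ge N_0$. This bound is stated for centres in $A$, whereas covering balls are centred anywhere in $X$. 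So I would first replace each covering ball meeting $A$ by $B_n(x,2r)$ with $x\in A$, and run Brin--Katok at radius $2r$. A mass distribution argument then gives $\Lambda^{s}_{N,r}(T,Z)\ge\mu(A)>0$ for $s=h_\mu-2\varepsilon$ and $N\ge N_0$, so $\hB(T,Z)\ge h_\mu-2\varepsilon$.

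This last step is the main obstacle: obtaining uniformity in $n$ from Egorov while managing the centres and radii of the covering balls. Alternatively, one can simply cite Bowen's theorem that $\hB(T,Z)=\htop(T|_Z)$ for compact invariant $Z$, which is what the authors do by referring to \cite{FengHuang,Bowen1973}.
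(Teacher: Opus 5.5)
Your proposal is correct. The paper itself gives no proof of this proposition: it quotes it from Feng and Huang \cite[Proposition~2.1]{FengHuang}, where item (iv) ultimately rests on Bowen's theorem $\hB(T,Z)=\htop(T|_Z)$ for compact invariant $Z$. So what you have written is a self-contained substitute for a citation.

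Your arguments for (i)--(iii) are the standard ones and are complete:
\begin{enumerate}[label=(\alph*)]
\item Countable subadditivity of $\Lambda^s_{N,r}$ passes to $\Lambda^s_r$ because $\Lambda^s_{N,r}$ is non-decreasing in $N$.
\item A maximal disjoint family of closed $(N,r)$-balls centred in $Y$ gives $\Lambda^s_{N,3r}(T,Y)\le P^s_{N,r}(T,Y)$, and hence $\Lambda^s_{3r}(T,Z)\le\cP^s_r(T,Z)$.
\item Centres of pairwise disjoint closed balls with a common $n$ are $(n,r)$-separated, which forces $P^s_r(T,Z)=0$ for $s>\hUC(T,Z)$.
\end{enumerate}

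For (iv), your Brin--Katok/mass-distribution sketch is valid. It is the same mechanism as the paper's own Lemma~\ref{lemma:mass}, except that your uniform mass bound holds only on a set $A$ of positive measure rather than on all of $Z$. Moving the centres of the covering balls into $A$ and doubling the radius handles this correctly. The resulting bound $\Lambda^s_{N,r}(T,Z)\ge\mu(A)>0$ for all large $N$ does force $\hB(T,Z)\ge s$.

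The only patch needed is the case $\htop(T|_Z)=+\infty$. There you should choose an ergodic $\mu$ with $h_\mu(T)>M$ for arbitrary $M$, and run the same argument with $s=M$, instead of asking for $h_\mu(T)>\htop(T|_Z)-\varepsilon$.

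Compared with the bare citation, your route shows that (i)--(iii) are purely combinatorial and hold for arbitrary $Z$. Compactness and invariance enter only in (iv), through the Bowen--Dinaburg formula, the variational principle and Brin--Katok. The citation, in exchange, buys brevity.
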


For $\mu\in\M(X)$ define the {\bf measure-theoretical upper and lower entropies} of $\mu$ respectively by
\[
\overline h_\mu(T):=\int \overline h_\mu(T,x)d\mu(x),
\qquad
\underline h_\mu(T):=\int \underline h_\mu(T,x)d\mu(x),
\]
where
\[
 \overline h_\mu(T,x):=\lim_{r\to0}\overline h_\mu(T,x,r),
 \qquad
 \underline h_\mu(T,x):=\lim_{r\to0}\underline h_\mu(T,x,r)
\]
and
\[
 \overline h_\mu(T,x,r):=\limsup_{n\to\infty}
 -\frac1n\log\mu(B_n(x,r)),
\]
\[
 \underline h_\mu(T,x,r):=\liminf_{n\to\infty}
 -\frac1n\log\mu(B_n(x,r)).
\]

We use the following compact-set variational principles, proved by
Feng and Huang in~\cite[Theorems~1.2(i) and Theorem~1.3(i)]{FengHuang}.

\begin{theorem}
\label{thm:feng-huang}
Let $(X,T)$ be a  topological dynamical system, for every non-empty compact set $K\subset X$, then
\[
 \begin{aligned}
 \hB(T,K)&=\sup\{\underline h_\mu(T):\mu\in\M(K)\},\\
 \hP(T,K)&=\sup\{\overline h_\mu(T):\mu\in\M(K)\}.
 \end{aligned}
\]
\end{theorem}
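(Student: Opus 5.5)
This statement is quoted from Feng and Huang and is used later as a black box, so what follows is only a plan for how one would prove it. In each of the two formulas I would treat the two inequalities separately. The inequality "entropy $\ge$ sup over measures" is a mass-distribution argument. The inequality "entropy $\le$ sup over measures" needs a dynamical Frostman lemma (Bowen case) or a Cantor-type construction (packing case). The latter are where the real work lies.

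\emph{Bowen, lower bound.} Let $\mu\in\M(K)$ and $s<\underline h_\mu(T)$. Then the set where $\underline h_\mu(T,x)>s$ has positive measure. By monotonicity in $r$ and countable additivity, there are $r>0$, $N\in\N$ and a set $A$ with $\mu(A)>0$ such that $\mu(B_n(x,r))\le e^{-ns}$ for all $x\in A$ and all $n\ge N$. Take any cover of $A$ by balls $B_{n_i}(y_i,r/2)$ with $n_i\ge N$, and discard the balls that miss $A$. A remaining ball contains some $x_i\in A$, and then $B_{n_i}(y_i,r/2)\subset B_{n_i}(x_i,r)$. Hence
\[
\mu(A)\le\sum_i e^{-sn_i}.
\]
This gives $\Lambda^s_{N,r/2}(T,K)\ge\mu(A)>0$, so $\hB(T,K)\ge s$.

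\emph{Bowen, upper bound (main obstacle).} Suppose $\hB(T,K)>s$. I would introduce the weighted quantity
\[
W^s_{N,r}(f)=\inf\Big\{\sum_i c_ie^{-sn_i}:\ \sum_i c_i\chi_{B_{n_i}(x_i,r)}\ge f,\ n_i\ge N\Big\}
\]
for $f\in C(X)$, together with its limits. A Vitali-type $5r$-covering lemma for Bowen balls gives
\[
\Lambda^{s+\epsilon}_{N,6r}(T,K)\lesssim W^{s}_{N,r}(\chi_K)\le\Lambda^s_{N,r}(T,K),
\]
so $W^s_{N,r}(\chi_K)>0$ for suitable $r$ and large $N$. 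The functional $p(f)=W^s_{N,r}(\chi_K f)/W^s_{N,r}(\chi_K)$ is sublinear on $C(X)$. By Hahn--Banach I get a linear functional $L\le p$ with $L(1)=1$. Positivity of $L$ follows from $p\le0$ on nonpositive functions, so Riesz representation gives a probability measure $\mu$ supported on $K$. This $\mu$ satisfies $\mu(B_n(x,r))\le C e^{-ns}$ for $n\ge N$, because $p$ is bounded on ball indicators by $e^{-ns}/W$. Consequently $\underline h_\mu(T,x)\ge s$ everywhere on $K$. I expect the covering comparison (with the loss $\epsilon$) and the regularity argument passing from continuous $f$ to indicators of balls to be the delicate step. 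Compactness of $K$ is used essentially here.

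\emph{Packing.} For the lower bound, fix $\mu$ and a set $A$ with $\mu(A)>0$ on which $\overline h_\mu(T,x,r)>s$ for a fixed small $r$. For any countable cover $\{Z_k\}$ of $K$, some $Z_k$ satisfies $\mu^*(Z_k\cap A)>0$. Every point of $Z_k\cap A$ has arbitrarily large $n$ with $\mu(B_n(x,r))<e^{-ns}$. The $5r$-covering lemma then extracts disjoint closed Bowen balls centred in $Z_k$ with $\sum e^{-sn_i}\gtrsim\mu^*(Z_k\cap A)$. This bounds $P^s_{r/5}(T,Z_k)$ below uniformly, giving $\cP^s\ge c>0$ and hence $\hP\ge s$.

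For the upper bound, assume $\hP(T,K)>s$. I would run a Joyce--Preiss style construction. First reduce to a compact $K'\subset K$ on which every relatively open piece has $\cP^s_r=\infty$, by removing the union of the pieces with $\sigma$-finite packing premeasure. Then inductively choose finite families of disjoint closed Bowen balls, with increasing times $n$, inside previously chosen balls. At each stage the mass is distributed proportionally to $e^{-sn_i}$, keeping the ratio of mass to weight controlled. The weak$^*$ limit gives $\mu$ on a Cantor subset of $K$ with $\mu(B_n(x,r))\le e^{-n(s-\epsilon)}$ along infinitely many $n$ at every point, so $\overline h_\mu(T)\ge s-\epsilon$. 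The bookkeeping that keeps nested balls disjoint, and keeps local masses small at the chosen times, is the second technical hurdle.
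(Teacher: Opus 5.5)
The paper gives no proof of this statement and only cites Feng and Huang; your outline is correct and follows their argument: Billingsley-type mass-distribution estimates for the two lower bounds, the weighted quantity $W^s_{N,r}$ with a Vitali-type covering comparison and a Hahn--Banach dynamical Frostman lemma for the Bowen upper bound, and a Joyce--Preiss-type Cantor construction (after discarding relatively open pieces that are $\sigma$-finite for the premeasure) for the packing upper bound. One step in the packing lower bound needs tidying: a single $Z_k$ with $\mu^*(Z_k\cap A)>0$ gives no constant that is uniform over covers, so instead apply $P^s_{r'}(T,Z_k)\ge\mu^*(Z_k\cap A)$ to every $k$ (with $r'$ slightly below $r/5$ to pass from closed to open balls) and sum, using countable subadditivity of $\mu^*$, to obtain $\cP^s_{r'}(T,K)\ge\mu(A)>0$.
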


\section{Proof of Theorm \ref{thm:intro-upper-capacity}}
\label{sec: proof of thm1.1}
Let $(X,d)$ be a compact metric space and $m\in\N_+$. The   product metric $$d^{\times m}((x_1,\ldots,x_m),(y_1,\ldots,y_m))=\max_{1\le i\le m}d(x_i,y_i)$$ on $X^m$ is compatible with the product topology. The induced product map $T^{\times m}: X^m \to X^m$
is defined by
\[
T^{\times m}(x_1,\ldots,x_m):=(Tx_1,\ldots,Tx_m).
\]
For $m\geq1$, we set
\[
 a_i:=\frac{2^{i-1}}{2^m-1},\qquad 1\leq i\leq m,
\]
and define
\begin{equation}
 \Phi_m(x_1,\ldots,x_m):=\sum_{i=1}^m a_i\delta_{x_i}.
 \label{eq:atomic-embedding}
\end{equation}

\begin{lemma}
\label{lem:atomic-embedding}
The map $\Phi_m\colon X^m\to\M(X)$ is a topological embedding and
\[
 T_*\circ\Phi_m=\Phi_m\circ T^{\times m}.
\]
Moreover, $\Phi_m(K^m)\subset\M(K)$ for every $K\subset X$.
\end{lemma}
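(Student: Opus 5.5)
The plan is to verify the three claims directly: continuity, injectivity, and then compactness to get an embedding. The conjugacy identity and the support statement are immediate computations.

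\emph{Continuity.} For $g\in C(X)$ we have $\int g\,d\Phi_m(x_1,\ldots,x_m)=\sum_{i=1}^m a_i g(x_i)$. This depends continuously on $(x_1,\ldots,x_m)$ in the product topology. Since the weak$^*$ topology is the initial topology generated by the maps $\mu\mapsto\int g\,d\mu$, the map $\Phi_m$ is continuous. Alternatively, with the metric $D$ from \eqref{eq:weak* metric}, one gets $D(\Phi_m(x),\Phi_m(y))\le \sum_\ell 2^{-\ell}\sum_i a_i|g_\ell(x_i)-g_\ell(y_i)|$, which tends to $0$ as $y\to x$.

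\emph{Injectivity.} This is the main point, and it is where the dyadic weights $a_i=2^{i-1}/(2^m-1)$ are used. Suppose $\Phi_m(x)=\Phi_m(y)=\mu$. The measure $\mu$ is atomic, and each atom carries mass $\mu(\{z\})=\sum_{i:x_i=z}a_i$. Because the $a_i$ are proportional to distinct powers of $2$, different subsets $S\subset\{1,\ldots,m\}$ give different values of $\sum_{i\in S}2^{i-1}$, by uniqueness of binary expansion. So for each atom $z$ the mass $\mu(\{z\})$ determines the index set $S_z=\{i: x_i=z\}$. Since this mass is the same when computed from $y$, we get $S_z=\{i:y_i=z\}$. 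The sets $S_z$ partition $\{1,\ldots,m\}$ as $z$ ranges over the atoms, so $x_i=y_i$ for every $i$.

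\emph{Embedding.} The space $X^m$ is compact and $\M(X)$ is Hausdorff (it is metrizable). A continuous injection from a compact space into a Hausdorff space is a homeomorphism onto its image, so $\Phi_m$ is a topological embedding.

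\emph{Conjugacy and support.} Since $T_*\delta_x=\delta_{Tx}$ and $T_*$ is linear on measures,
\[
T_*\Phi_m(x_1,\ldots,x_m)=\sum_i a_i\delta_{Tx_i}=\Phi_m(T^{\times m}(x_1,\ldots,x_m)).
\]
Finally, if every $x_i\in K$, then $\operatorname{supp}\Phi_m(x)=\{x_1,\ldots,x_m\}\subset K$, so $\Phi_m(x)\in\M(K)$. Note that the $a_i$ are positive and sum to $1$, so $\Phi_m(x)$ is indeed a probability measure.

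The only step needing care is injectivity when some of the coordinates coincide. The binary-expansion argument above handles it.
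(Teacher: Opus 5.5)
Your proof is correct and follows the same route as the paper's: distinct subset sums of powers of two give injectivity, the formula $\int f\,d\Phi_m(x)=\sum_i a_i f(x_i)$ gives continuity, and $T_*\delta_x=\delta_{Tx}$ gives equivariance. You also spell out two steps the paper leaves implicit: the atom-by-atom bookkeeping that makes injectivity rigorous when coordinates coincide, and the compact-to-Hausdorff argument that upgrades the continuous injection to an embedding.
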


\begin{proof}
Since distinct subsets of $\{1,2,4,\ldots,2^{m-1}\}$ have distinct sums, the map  $\Phi_m$ is injective.  For any
$f\in C(X)$, we have
$
 \int f\,\dd\Phi_m(\boldsymbol{x})=\sum_{i=1}^m a_i f(x_i),
$
thus $\Phi_m$ is continuous. The commutativity of the diagram follows immediately from
$T_*\delta_x=\delta_{Tx}$, and the inclusion $\Phi_m(K^m)\subset \mathcal{M}(K)$ is immediately comes
from the defintion of $\M (K)$.
\end{proof}

Theorem \ref{thm:intro-upper-capacity} follows immediately from Proposition \ref{prop:upper-amplification} and Theorem \ref{thm:upper-converse} below.
\begin{prop}
\label{prop:upper-amplification}
Let $(X,T)$ be a  topological dynamical system. For any non-empty compact $K\subset X$ and  $m\geq1$, we have
\[
 \hUC(T_*,\M(K))\geq m\,\hUC(T,K).
\]
Furthermore, $\hUC(T,K)>0$ implies that $\hUC(T_*,\M(K))=\infty$.
\end{prop}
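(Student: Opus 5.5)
Our plan is to use the atomic embedding $\Phi_m$ of Lemma~\ref{lem:atomic-embedding}, which conjugates $(K^m,T^{\times m})$ into $(\M(K),T_*)$. The upper capacity entropy is defined through separated sets, so we transfer separated sets. We work in three steps.

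\emph{Step 1: uniform continuity of the inverse.} Since $\Phi_m\colon X^m\to\Phi_m(X^m)$ is a continuous injection of a compact space, it is a homeomorphism onto its image, and its inverse is uniformly continuous. Hence for every $r>0$ there is $\eta=\eta(r,m)>0$ with the following property: whenever $D(\Phi_m\boldsymbol x,\Phi_m\boldsymbol y)\le\eta$, we have $d^{\times m}(\boldsymbol x,\boldsymbol y)\le r$. By the intertwining relation $T_*^j\Phi_m=\Phi_m (T^{\times m})^j$, any $(n,r)$-separated set $E\subset K^m$ for $T^{\times m}$ (in $d^{\times m}_n$) is mapped to an $(n,\eta)$-separated set $\Phi_m(E)\subset\M(K)$ for $T_*$ (in $D_n^*$). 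Injectivity of $\Phi_m$ keeps the cardinality. Therefore
\[
s_n(T_*,\M(K),\eta)\ge s_n(T^{\times m},K^m,r).
\]

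\emph{Step 2: product lower bound.} If $E\subset K$ is a maximal $(n,r)$-separated set for $T$, then $E^m\subset K^m$ is $(n,r)$-separated for $d^{\times m}_n=\max_i d_n$, since distinct tuples differ in some coordinate. So $s_n(T^{\times m},K^m,r)\ge s_n(T,K,r)^m$. Combining with Step 1 gives
\[
\limsup_{n}\tfrac1n\log s_n(T_*,\M(K),\eta)\ge m\limsup_n\tfrac1n\log s_n(T,K,r).
\]
As $r\to0$ we may choose $\eta(r,m)\to0$ (shrinking $\eta$ only increases $s_n$). The left side is then bounded by $\hUC(T_*,\M(K))$, because $\limsup_n\frac1n\log s_n(\cdot,\eta)$ is monotone in $\eta$. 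Letting $r\to0$ yields $\hUC(T_*,\M(K))\ge m\,\hUC(T,K)$.

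\emph{Step 3: the dichotomy.} If $\hUC(T,K)>0$, then letting $m\to\infty$ gives $\hUC(T_*,\M(K))=\infty$. This also covers the case $\hUC(T,K)=\infty$.

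The only delicate point is Step 1: the separation radius on $\M(K)$ depends on $m$ through the modulus of continuity of $\Phi_m^{-1}$. This is harmless because $m$ is fixed before we take $r\to0$, and the monotonicity in the radius lets us pass to the limit.
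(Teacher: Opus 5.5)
Your proof is correct and follows the paper's route: the product bound $s_n(T^{\times m},K^m,r)\ge s_n(T,K,r)^m$, combined with the atomic embedding $\Phi_m$ of Lemma~\ref{lem:atomic-embedding}. The only difference is in the conjugacy step. You prove the needed inequality $\hUC(T_*,\Phi_m(K^m))\ge\hUC(T^{\times m},K^m)$ directly, by transferring separated sets via the uniform continuity of $\Phi_m^{-1}$. The paper instead just asserts that upper capacity entropy is unchanged under the conjugacy.
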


\begin{proof}
Equip $X^m$ with the product metric $d^{\times m}$.  If $E$ is a
$(n,r)$-separated set of $K$, then $E^m$ is a $(n,r)$-separated set of $K^m$.  Hence
\begin{equation*}
 \hUC(T^{\times m},K^m)\geq m\,\hUC(T,K).
\end{equation*}

By Lemma~\ref{lem:atomic-embedding}, $\Phi_m$ conjugates
$(X^m,T^{\times m})$ to the compact invariant subsystem
$\Phi_m(X^m)$ of $(\M(X),T_*)$.  Proposition~\ref{prop:entropy-properties}
and monotonicity therefore give
\[
 \begin{split}
 \hUC(T_*,\M(K))
 &\geq\hUC(T_*,\Phi_m(K^m))\\
 &=\hUC(T^{\times m},K^m)
 \geq m\,\hUC(T,K).
 \end{split}
\]
\end{proof}

We will prove the reverse positivity implication via the  following combinatorial lemma of Glanser and Weiss \cite[Proposition~2.1]{GlasnerWeiss}.
\begin{lemma}\label{lemma GlasnerWeiss}
	For every $\varepsilon,b>0$ there are $N_0\in\N$ and $c_0>0$ such
	that the following holds for $N\geq N_0$.  If
	$\Phi\colon\ell_1^m\to\ell_\infty^N$ is a linear map with
	\[\|\Phi\|=\sup\{\|\Phi(x)\|_{\infty}: x\in\ell_1^m,\|x\|\leq 1\}\leq 1,\]
	and if
	$\Phi(B_1(\ell_1^m))$ contains more than $2^{bN}$ points that are
	$\varepsilon$-separated, then $m\geq2^{c_0N}$, where $B_1(\ell_1^m)=\{y\in\ell_1^m:\|y\|\leq1\}$.
\end{lemma}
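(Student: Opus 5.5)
Since the lemma is quoted from \cite[Proposition~2.1]{GlasnerWeiss}, the plan below is only a sketch of how I would reprove it. I have not checked the middle step in detail; it is the hard part.

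\textbf{Reduction to a hull of $m$ points.} Write $v_i:=\Phi(e_i)\in\ell_\infty^N$ for $1\le i\le m$. Then $\|v_i\|_\infty\le 1$ and
\[
\Phi(B_1(\ell_1^m))=\operatorname{absconv}\{v_1,\ldots,v_m\},
\]
so the statement is purely about the absolute convex hull $C$ of $2m$ points $\pm v_i$ in the cube $[-1,1]^N$. The goal is to show: if $C$ contains more than $2^{bN}$ points that are $\varepsilon$-separated in $\ell_\infty^N$, then the finite set $V=\{\pm v_i\}$ is combinatorially rich on a proportional set of coordinates. Once that is established, a Sauer--Shelah count forces $|V|$ to be exponentially large.

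\textbf{Step 1: shattering for the separated set.} Let $A\subset C$ be the $\varepsilon$-separated set. Round every coordinate of each point of $A$ to a grid of mesh $\varepsilon/4$ in $[-1,1]$. Distinct points of $A$ have distinct rounded images, and each coordinate takes only $O(1/\varepsilon)$ values. I would then apply the Pajor/Alon et al.\ multivalued form of the Sauer--Shelah lemma to these rounded images. Since $|A|>2^{bN}$, this should produce a set of coordinates $S\subset\{1,\ldots,N\}$ with $|S|\ge\delta N$, where $\delta=\delta(\varepsilon,b)>0$, together with thresholds $t_j$ for $j\in S$, such that $A$ $\varepsilon/8$-shatters $S$. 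That is, every sign pattern on $S$ is realized by some $a\in A$ with $a_j\ge t_j+\varepsilon/8$ or $a_j\le t_j-\varepsilon/8$, according to the sign. The requirement $N\ge N_0$ is used here to absorb the polynomial factors in the Sauer--Shelah bound.

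\textbf{Step 2 (main obstacle): from the hull back to the vertices.} I must show that shattering by points of $C$ forces $V$ itself to $\varepsilon'$-shatter a subset $S'\subset S$ with $|S'|\ge\delta'N$, for some $\delta',\varepsilon'>0$ depending only on $\varepsilon,b$. A naive averaging argument fails. Approximating each point of $C$ by an average of $k\approx\varepsilon^{-2}\log N$ vertices (Maurey's empirical method with a union bound over the $N$ coordinates) only gives
\[
2^{bN}\le(2m)^{k},
\]
hence $m\ge 2^{cN/\log N}$. That loses a logarithm, so a dimension-free tool is needed. I would first try Elton's theorem, in Talagrand's sharpened form. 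Test the shattering on the Rademacher-type functionals $x\mapsto\sum_{j\in S}\sigma_j(x_j-t_j)$. Every sign vector $\sigma$ is achieved at level $\gtrsim\varepsilon|S|$ by some point of $C$, hence by some vertex in $V$. An Elton-type argument should then extract a proportional subset $S'$ on which $V$ shatters at scale $\varepsilon'$. I expect this to be exactly the step where the Glasner--Weiss proof does its real work.

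\textbf{Step 3: counting.} If $V$ $\varepsilon'$-shatters $S'$, then the $2^{|S'|}$ sign patterns on $S'$ are all realized by elements of $V$, since these are single points rather than convex combinations. Hence
\[
2m\ge 2^{|S'|}\ge 2^{\delta'N},
\]
which gives $m\ge 2^{c_0N}$ with $c_0=\delta'/2$, say, for $N\ge N_0$.
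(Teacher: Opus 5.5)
The paper does not prove this lemma; it quotes it from Glasner--Weiss. Your sketch therefore has to stand on its own, and as written it does not.

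\textbf{Step 2 is unproved.} You only assert it ("should", "I expect"). It amounts to the Elton--Pajor theorem, which you would have to cite in a precise form.

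\textbf{Step 1 does not follow from the tools you name.} After rounding to a grid of mesh $\varepsilon/4$, the exact multivalued Sauer--Shelah lemmas (of Karpovsky--Milman or Natarajan type) give, for each $j\in S$, two \emph{distinct} grid values. These may be adjacent cells, coming from real values arbitrarily close together, so no $\varepsilon/8$ gap results. The strong-shattering lemma of Alon--Ben-David--Cesa-Bianchi--Haussler does give a gap. However, its bound carries an extra factor of order $\log N$, so from $|A|>2^{bN}$ it only yields $|S|$ of order $N$ divided by logarithmic factors. Step~3 then gives only a subexponential bound on $m$, which is exactly the loss you attribute to Maurey. A proportional gap-shattering statement is true, but it needs a substantially sharper combinatorial theorem or an extra reduction that you have not supplied.

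\textbf{Step 2 can be dropped.} Granting Step~1, you already observe that for every $\sigma\in\{\pm1\}^S$ some $u\in V$ has $\sum_{j\in S}\sigma_j(u_j-t_j)\ge\varepsilon|S|/8$. For fixed $u$ and uniform $\sigma$, Hoeffding bounds the probability of this by $e^{-\varepsilon^2|S|/512}$. A union bound over the $2m$ points of $V$ then gives $2m\ge e^{\varepsilon^2|S|/512}$.

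\textbf{Step 1 can be dropped too.} Maurey's method loses no logarithm once you stop demanding approximation in \emph{every} coordinate.
\begin{itemize}
\item Write $a\in A$ as $\mathbb{E}Z$ with $Z$ valued in $\{0,\pm v_1,\ldots,\pm v_m\}$.
\item Take $k=\lceil 18\varepsilon^{-2}\ln(2/\eta)\rceil$. Hoeffding in each coordinate, plus averaging over coordinates, gives an average $w(a)$ of $k$ such points with $|w(a)_j-a_j|<\varepsilon/3$ outside a set $B(a)$ of at most $\eta N$ coordinates.
\item The map $a\mapsto (w(a),B(a),(\lfloor 3a_j/\varepsilon\rfloor)_{j\in B(a)})$ is injective on your $\varepsilon$-separated set. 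Indeed, two points with the same image differ by less than $2\varepsilon/3$ off $B$ and by less than $\varepsilon/3$ on $B$.
\item Hence, with $H$ the binary entropy function,
\[ 2^{bN}<(2m+1)^k\,2^{H(\eta)N}\,(6/\varepsilon+2)^{\eta N}. \]
\item Choose $\eta\le 1/2$ with $H(\eta)+\eta\log_2(6/\varepsilon+2)\le b/2$. This gives $m\ge 2^{bN/(4k)}$ for $N\ge N_0$, where $k$ depends only on $\varepsilon$ and $b$.
\end{itemize}
This is a complete elementary proof with no Sauer--Shelah or Elton-type input.
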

\begin{theorem}
\label{thm:upper-converse}
Let $(X,T)$ be a  topological dynamical system. For every non-empty compact $K\subset X$,
\[
 \hUC(T_*,\M(K))>0\quad\Longrightarrow\quad\hUC(T,K)>0.
\]
\end{theorem}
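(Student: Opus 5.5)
We argue by contraposition, following the Glasner--Weiss strategy but localized to $K$. Assume $\hUC(T,K)=0$ and fix $\varepsilon>0$; we show $\limsup_n\frac1n\log s_n(T_*,\M(K),\varepsilon)=0$. Since $D$ is built from finitely many test functions up to a tail of size $2^{-L}$, it suffices to control separation measured by $\max_{\ell\le L,\,0\le j<n}|\int g_\ell\circ T^j\,d\mu-\int g_\ell\circ T^j\,d\nu|$ with $L=L(\varepsilon)$ fixed. Indeed, if $D^*_n(\mu,\nu)>\varepsilon$, then some $j<n$ and some $\ell\le L$ satisfy $|\int g_\ell\circ T^j d\mu-\int g_\ell\circ T^j d\nu|>\varepsilon/2$.

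Next we discretize $K$. Choose $\delta>0$ with $d(x,y)\le\delta\Rightarrow|g_\ell(x)-g_\ell(y)|<\varepsilon/8$ for all $\ell\le L$. Let $F=\{y_1,\dots,y_m\}$ be a minimal $(n,\delta)$-spanning set for $K$, so $m=r_n(T,K,\delta)$. Choose a Borel partition $\{A_1,\dots,A_m\}$ of $K$ with $A_i\subset\overline B_n(y_i,\delta)$. To each $\mu\in\M(K)$ associate the probability vector $p(\mu)=(\mu(A_i))_i\in B_1(\ell_1^m)$. Then for every $j<n$ and $\ell\le L$,
\[
\Bigl|\int g_\ell\circ T^j\,d\mu-\sum_i \mu(A_i)\,g_\ell(T^jy_i)\Bigr|\le\varepsilon/8 .
\]
Define the linear map $\Phi:\ell_1^m\to\ell_\infty^{nL}$ by $\Phi(p)_{(j,\ell)}=\sum_i p_i\,g_\ell(T^jy_i)$. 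Because $\|g_\ell\|_\infty\le1$, we have $\|\Phi\|\le1$. Consequently, an $(n,\varepsilon)$-separated set $\mathcal E\subset\M(K)$ for $D^*_n$ maps to points $\Phi(p(\mu))$ that are $\varepsilon/4$-separated in $\ell_\infty^{nL}$.

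Now we apply Lemma~\ref{lemma GlasnerWeiss}. Suppose, for contradiction, that $\limsup_n\frac1n\log s_n(T_*,\M(K),\varepsilon)>0$. Then along a subsequence $|\mathcal E|>2^{bnL}$ for some $b>0$, with $N=nL$. Lemma~\ref{lemma GlasnerWeiss} (with $\varepsilon/4$ and $b$) then gives $m\ge2^{c_0nL}$ for large $n$ in the subsequence. This says $r_n(T,K,\delta)\ge 2^{c_0Ln}$ infinitely often, so
\[
\hUC(T,K)\ge\limsup_n\frac1n\log r_n(T,K,\delta)\ge c_0L\log2>0,
\]
which is a contradiction. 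Hence $\hUC(T_*,\M(K))=0$. Note that only the spanning set of $K$ is used, not invariance of $K$, which is exactly why the argument localizes.

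The main obstacle is the bookkeeping in the reduction. The separation constant must be genuinely uniform in $n$ so that the Lemma applies with fixed $(\varepsilon/4,b)$. The exponent also needs care: the Lemma is stated with base $2$ and $N=nL$ coordinates, so positivity of $b$ must be expressed relative to $N$ rather than $n$. Both points are harmless, since $L$ is fixed once $\varepsilon$ is fixed. One should also check that the discretization error $\varepsilon/8$, doubled, stays below half the separation.
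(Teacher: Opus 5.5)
Your proposal is correct and follows essentially the same route as the paper: you discretize $K$ by an $(n,\delta)$-spanning set, send an $(n,\varepsilon)$-separated family in $\M(K)$ to separated points of $\Phi(B_1(\ell_1^m))\subset\ell_\infty^{nL}$, and apply the Glasner--Weiss lemma to force $r_n(T,K,\delta)\ge 2^{c_0Ln}$ infinitely often. The differences are only cosmetic: you argue by contraposition, and you drop the $2^{-\ell}$ weights in the linear map, which gives the cleaner separation constant $\varepsilon/4$ instead of the paper's $\varepsilon/2^{L+4}$.
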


\begin{proof}
Assume  that $\hUC(T_*,\M(K))>0$.  There are $\varepsilon,a>0$ and an infinite set $\mathcal{N}\subset\N$  such that for any
$n\in\mathcal{N}$, $\M(K)$ contains an $(n,\varepsilon)$-separated set $E_n$ with
\begin{equation*}
 |E_n|>e^{an}.
 \label{eq:converse-many-measures}
\end{equation*}

Choose $L\in\N_+$ large enough such that $\sum_{\ell>L}2^{-\ell}<\varepsilon/32$. Since $g_{\ell}$ is continuous for any $\ell\in\N_+$, there exists $\delta>0$ such that $d(x,y)<\delta$ yields that $|g_{\ell}(x)-g_{\ell}(y)|<\varepsilon/8$ for all $1\leq\ell\leq L$.

Let $M_n=r_n(T,K,\delta)$ and choose an $(n,\delta)$-spanning set
$\{z_1,\ldots,z_{M_n}\}\subset X$.  Its closed Bowen balls yield a
Borel partition $K_1,\ldots,K_{M_n}$ of $K$, with
$K_i\subset\overline B_n(z_i,\delta)$.  Define a linear map
$ \Phi_n\colon\ell_1^{M_n}\longrightarrow\ell_\infty^{nL}$
by
\begin{equation*}
 \Phi_n(\{v_i\}_{i=1}^{M_n})=
 \{2^{-\ell}\sum_{i=1}^{M_n}v_i g_\ell(T^jz_i)
 \}_{\substack{1\leq\ell\leq L, \ 0\leq j<n}}.
 \label{eq:converse-linear-map}
\end{equation*}
Since $\|g_\ell\|_\infty\leq1$, one has $\|\Phi_n\|\leq1$.

We now prove that for distinct $\nu,\mu\in E_n$, the following vectors in $\Phi_n(B_1(\ell_1^{M_n}))$ are $\frac{\varepsilon}{2^{L+4}}$-separated:
	\begin{equation*}
		\Phi_n(\nu(K_1),\ldots,\nu(K_{M_n}) \text{ and } \Phi_n((\mu(K_1),\ldots,\mu(K_{M_n})).
	\end{equation*}
	Otherwise, for any $1\leq \ell\leq L$ and $0\leq j<n$ we have
	\begin{equation*}
		\frac{\left|\sum_{i=1}^{M_n}\nu(K_i) g_{\ell}(T^{j}z_i)-\sum_{i=1}^{M_n}\mu(K_i) g_{\ell}(T^{j}z_i)\right|}{2^{\ell}}\leq \frac{\varepsilon}{2^{L+4}}.
	\end{equation*}
	Furthermore, for any $1\leq \ell\leq L$ and $0\leq j<n$, we have that
\begin{equation*}
		\begin{split}
			&\left|\int g_{\ell}(T^jx)d\nu(x)-\int g_{\ell}(T^jx)d\mu(x)\right|\\
			&\leq \left|\int g_{\ell}(T^jx)d\nu(x)-\sum_{i=1}^{M_n}\nu(K_i)g_{\ell}(T^jz_i)\right|+
			\left|\sum_{i=1}^{M_n}\nu(K_i)g_{\ell}(T^jz_i)-\sum_{i=1}^{M_n}\mu(K_i)g_{\ell}(T^jz_i)\right|\\
			&+\left|\int g_{\ell}(T^jx)d\mu(x)-\sum_{i=1}^{M_n}\mu(K_i)g_{\ell}(T^jz_i)\right|\\
			&=\left|\sum_{i=1}^{M_n}\int_{K_i} \left(g_{\ell}(T^jx)-g_{\ell}(T^jz_i)\right)d\nu(x)\right|+
			\left|\sum_{i=1}^{M_n}\nu(K_i)g_{\ell}(T^jz_i)-\sum_{i=1}^{M_n}\mu(K_i)g_{\ell}(T^jz_i)\right|\\
			&+\left|\sum_{i=1}^{M_n}\int_{K_i} \left(g_{\ell}(T^jx)-g_{\ell}(T^jz_i)\right)d\mu(x)\right|\\
			&\leq \frac{\varepsilon}{8}+\frac{\varepsilon}{16}+\frac{\varepsilon}{8}=\frac{5\varepsilon}{16}.
		\end{split}
	\end{equation*}
	This implies that for all $0\leq j<n$, we have 
	\begin{equation*}
		\begin{split}
			D(T_*^j\nu,T_*^j\mu)&=\sum_{\ell=1}^{\infty}2^{-\ell}\left|\int g_{\ell}(T^jx)d\nu(x)-\int g_{\ell}(T^jx)d\mu(x)\right|\\
			&\leq \sum_{\ell=1}^{L}2^{-\ell} \cdot \frac{5\varepsilon}{16}+ \sum_{\ell=L+1}^{\infty}2^{-\ell}\cdot 2\\
			&\leq \frac{5\varepsilon}{16}+\frac{\varepsilon}{16}=\frac{3\varepsilon}{8}<\frac{\varepsilon}{2}
		\end{split}
	\end{equation*}
Thus $D_n(T_*\nu,T_*\mu) <\frac{\varepsilon}{2}$.
 which leads to a contradiction with the fact $E_n$ is a $(n,\varepsilon)$-separated set. 
 
 Since $|E_n|\geq e^{an/2}$, by Lemma \ref{lemma GlasnerWeiss}, there exists $c_0>0$ and $N_0\in\N$ such that for every $n\in\mathcal{N}$ with $Ln\geq N_0$ we have $M_n\geq 2^{c_0nL}$. Since $\mathcal{N}$ is an infinite set,  we conclude that 
\[
 \hUC(T,K)\geq
 \limsup_{n\to\infty}\frac1n\log r_n(T,K,\delta)
 \geq c_0L\log2>0.
\]
\end{proof}

\begin{proof}[Proof of Theorem \ref{thm:intro-upper-capacity}]
The second equivalence in Theorem  \ref{thm:intro-upper-capacity} is obtained by combining Proposition \ref{prop:upper-amplification} and Theorem~\ref{thm:upper-converse}   above, and the zero statement follows by taking
contrapositives.
\end{proof}

\section{Proof of Theorem \ref{thm:intro-packing}}
\label{sec:proof of thm1.2}
\begin{lemma}
	\label{lem:large-core}
	Let $(X,T)$ be a  topological dynamical system. Let $\mu\in\M(X)$ with $\overline h_\mu(T)=0$.  For every
	$\eta>0$ and every $\kappa>0$, there exists a Borel set $A\subset X$
	such that
	\begin{equation*}
		\mu(A)>1-\kappa
		\qquad\text{and}\qquad
		\hUC(T,A)<\eta.
	\end{equation*}
\end{lemma}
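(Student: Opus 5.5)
Since $\overline h_\mu(T)=\int\overline h_\mu(T,x)\,d\mu(x)=0$ and the integrand is non-negative, we get $\overline h_\mu(T,x)=0$ for $\mu$-a.e.\ $x$. The plan is to turn this pointwise statement into a uniform one on a large set, and then bound separated sets in that set by a standard mass-counting argument. Fix $\eta,\kappa>0$.

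\emph{Step 1 (choice of scale).} For each $x$ we have $\overline h_\mu(T,x,r)\uparrow\overline h_\mu(T,x)$ as $r\downarrow0$, so for a.e.\ $x$ and every $r>0$, $\overline h_\mu(T,x,r)=0$. This gives a single good set $G$ of full measure that works for all scales simultaneously.

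\emph{Step 2 (uniformization).} Fix a sequence $r_k=1/k$. For each $k$, the function $x\mapsto \mu(B_n(x,r_k))$ is lower semicontinuous, hence Borel. For $x\in G$ we have
\[
\mu(B_n(x,r_k))\ge e^{-n\eta/2}\quad\text{for all large } n.
\]
By Egorov-type reasoning, the sets
\[
A_{k,N}=\{x:\mu(B_n(x,r_k))\ge e^{-n\eta/2}\ \forall n\ge N\}
\]
increase to a set containing $G$ as $N\to\infty$. So we can choose $N_k$ with $\mu(A_{k,N_k})>1-\kappa 2^{-k}$. Put
\[
A=\bigcap_k A_{k,N_k}.
\]
Then $\mu(A)>1-\kappa$, and $A$ is Borel.

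\emph{Step 3 (counting).} Fix $k$ and $n\ge N_k$, and let $E\subset A$ be $(n,2r_k)$-separated. The balls $B_n(x,r_k)$, $x\in E$, are pairwise disjoint, and each has $\mu$-mass at least $e^{-n\eta/2}$. Hence $|E|\le e^{n\eta/2}$, so $s_n(T,A,2r_k)\le e^{n\eta/2}$ for all $n\ge N_k$. This gives
\[
\limsup_n\frac1n\log s_n(T,A,2r_k)\le\eta/2
\]
for every $k$, and letting $k\to\infty$ yields $\hUC(T,A)\le\eta/2<\eta$.

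The main obstacle is Step 2: the estimate must hold at every scale $r$ uniformly on one set $A$, since $\hUC$ involves $r\to0$. The countable intersection with summable measure losses $\kappa2^{-k}$ handles this. Only monotonicity of $s_n$ in $r$ is needed to pass from the countable family $r_k$ to $\lim_{r\to0}$. The measurability of $A_{k,N}$ follows from the semicontinuity of $x\mapsto\mu(B_n(x,r))$, which holds because the balls are open.
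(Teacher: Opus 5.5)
Your proof is correct and follows the paper's argument essentially verbatim. It uses the same sets $\bigcap_{n\ge N}\{x:\mu(B_n(x,\varepsilon_k))>e^{-n\eta/2}\}$, the same choice of $N_k$ with losses $\kappa 2^{-k}$, and the same disjoint-balls count at scale $2\varepsilon_k$. The only differences are cosmetic: you bound $s_n$ directly where the paper bounds $r_n$ through a maximal separated set, and you justify Borel measurability via lower semicontinuity of $x\mapsto\mu(B_n(x,r))$, which the paper only asserts.
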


\begin{proof}
	Since $\overline h_\mu(T)=0$, we have $\mu(\{x\in X: \overline h_\mu(T,x)=0\})=1$.
	Fix a sequence $\varepsilon_k\to 0$ and let $s=\eta/2$.  For
	$k,N\in\N_+$, define
	\begin{equation*}
		G_{k,N}:=\bigcap_{n\ge N}
		\left\{x\in X:\mu(B_n(x,\varepsilon_k))>e^{-sn}\right\}.
	\end{equation*}
	Each $G_{k,N}$ is Borel.  Fix
	$k\in\N_+$ and choose $x\in X$ with $\overline h_\mu(T,x)=0$. Then
	\[
	0\le \limsup_{n\to\infty}-\frac1n
	\log\mu(B_n(x,\varepsilon_k))
	\le \overline h_\mu(T,x)=0<s.
	\]
	Hence $\mu(B_n(x,\varepsilon_k))>e^{-sn}$ holds for all
	sufficiently large $n\in\N$. Thus $\{x\in X: \overline h_\mu(T,x)=0\}\subset \bigcup_{N=1}^{\infty}G_{k,N}$, and  we have $\mu(\bigcup_{N=1}^{\infty}G_{k,N})=1$.
	Choose $N_k\in\N_+$ so that $\mu(G_{k,N_k})>1-\kappa 2^{-k}$.

	Let $A=\bigcap_{k=1}^{\infty}G_{k,N_k}$.
	Then $A$ is Borel and
	\[
	\mu(A^c)
	\le\sum_{k=1}^{\infty}\mu(G_{k,N_k}^c)
	<\sum_{k=1}^{\infty}\kappa2^{-k}=\kappa.
	\]
	
	 For any $n\ge N_k$, let $E\subset A$ be a maximal
	$(n,2\varepsilon_k)$-separated set of $A$. Then  $B_n(x,\varepsilon_k)$ and $B_n(y,\varepsilon_k)$
	are pairwise disjoint for distinct $x,y\in E$.  Since
	$E\subset A\subset G_{k,N_k}$, we have
	\[
	1\ge\sum_{x\in E}\mu(B_n(x,\varepsilon_k))
	>|E|e^{-sn},\ \forall n\ge N_k.
	\]
	Thus $|E|<e^{sn}$, for any $n\ge N_k$.  Obviously $E$ is also an
	$(n,2\varepsilon_k)$-spanning set of $A$. Then $r_n(T,A,2\varepsilon_k)\le |E|<e^{sn}$ for all $k\in\N_+$ and $n\ge N_k$. This yields that $\hUC(T,A)\le s=\eta/2<\eta$.
	
\end{proof}

\begin{lemma}
	\label{thm:local-to-capacity}
	Let $(X,T)$ be a  topological dynamical system. For every $a>0$
	and $\varepsilon>0$, there exists $c>0$ 
	such that the following statement holds.  Let
	$A\subset X$ be Borel and $\tau\in\M(\M(X))$ satisfy
	$\tau(\M(A))=1$. Denote
	\begin{equation*}
		E:=\left\{\nu\in\M(X):
		\limsup_{n\to\infty}-\frac1n
		\log\tau(B_{D,n}^*(\nu,\varepsilon))>a\right\}.
	\end{equation*}
	If $\tau(E)>0$, then $\hUC(T,A)\geq c$.
\end{lemma}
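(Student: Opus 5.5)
The plan is to reduce to the argument of Theorem~\ref{thm:upper-converse}: from $\tau(E)>0$ we will extract, for infinitely many $n$, a large $(n,\varepsilon/2)$-separated family of measures inside $\M(A)$. Then we rerun the Glasner--Weiss linear-algebra argument with $A$ in place of $K$. The constant $c$ will come out of Lemma~\ref{lemma GlasnerWeiss} and the choice of $L,\delta$. These depend only on $\varepsilon$ and $a$, not on $A$ or $\tau$.

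\emph{Step 1 (a single scale with many light balls).} For $n\in\N_+$ put
\[
E_n:=\{\nu\in\M(X):\tau(B_{D,n}^*(\nu,\varepsilon))<e^{-an}\}.
\]
Since $\nu\mapsto\tau(B^*_{D,n}(\nu,\varepsilon))$ is lower semicontinuous (the balls are open), $E_n$ is Borel. By definition of $E$, every $\nu\in E$ lies in $E_n$ for infinitely many $n$, so $E\subset\bigcup_{n\ge N}E_n$ for every $N$. Suppose $\tau(E_n)<\tau(E)/(2n^2)$ held for all large $n$. Then $\tau(\bigcup_{n\ge N}E_n)\to0$, a contradiction. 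Hence there is an infinite set $\mathcal N\subset\N$ with $\tau(E_n\cap\M(A))\ge \tau(E)/(2n^2)$ for $n\in\mathcal N$; here we use $\tau(\M(A))=1$, passing to a Borel subset of full measure if needed.

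\emph{Step 2 (separated set).} For $n\in\mathcal N$, take a maximal family $F_n\subset E_n\cap\M(A)$ with $D_n^*(\mu,\nu)\ge\varepsilon$ for distinct $\mu,\nu\in F_n$. By maximality, the open balls $B^*_{D,n}(\mu,\varepsilon)$, $\mu\in F_n$, cover $E_n\cap\M(A)$. Each of them has $\tau$-mass $<e^{-an}$ because the centres lie in $E_n$. Hence
\[
|F_n|> e^{an}\,\tau(E)/(2n^2)> e^{an/2}
\]
for large $n\in\mathcal N$, and $F_n$ is $(n,\varepsilon/2)$-separated in $\M(A)$.

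\emph{Step 3 (Glasner--Weiss).} Now repeat the proof of Theorem~\ref{thm:upper-converse} verbatim with $K$ replaced by $A$ and $E_n$ by $F_n$. Choose $L$ with $\sum_{\ell>L}2^{-\ell}<\varepsilon/64$ and $\delta$ from uniform continuity of $g_1,\dots,g_L$, both depending only on $\varepsilon$. Take an $(n,\delta)$-spanning set $\{z_i\}\subset X$ for $A$ of size $M_n=r_n(T,A,\delta)$. This is finite because $X$ is compact, and compactness of $A$ is never used. Partition $A$ into Borel pieces $A_i\subset\overline B_n(z_i,\delta)$; since $\mu(A)=1$ for $\mu\in\M(A)$, the vectors $(\mu(A_i))_i$ lie in $B_1(\ell_1^{M_n})$. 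The same estimate shows their images under $\Phi_n$ are $\varepsilon/2^{L+5}$-separated in $\ell_\infty^{nL}$. Apply Lemma~\ref{lemma GlasnerWeiss} with separation $\varepsilon/2^{L+5}$ and $b=a/(2L\log2)$; this yields $c_0=c_0(a,\varepsilon)$ and $M_n\ge 2^{c_0nL}$ for large $n\in\mathcal N$. We conclude $\hUC(T,A)\ge c:=c_0L\log 2$.

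\emph{Main obstacle.} The main point to check is Step~1, namely passing from the pointwise $\limsup$ condition to one common scale $n$ at which a set of non-negligible $\tau$-mass has uniformly light balls. The polynomial loss $1/(2n^2)$ is harmless against $e^{an}$. I also need to make sure that the measurability issues for $\M(A)$ with $A$ merely Borel cause no trouble. The remaining work is bookkeeping of constants to confirm that $c$ depends only on $(a,\varepsilon)$.
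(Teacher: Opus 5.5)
Your proposal is correct and follows essentially the paper's proof. The paper likewise extracts infinitely many scales $n$ at which the set $H_n$ of $\nu\in\M(A)$ with $\tau(B^*_{D,n}(\nu,\varepsilon))<e^{-an}$ has non-negligible $\tau$-mass; it does this via Borel--Cantelli with threshold $e^{-an/2}$, where you use $\tau(E)/(2n^2)$. It then takes a maximal separated subset to obtain more than $e^{an/2}$ separated measures, and reruns the Glasner--Weiss linear-map argument with an $(n,\delta)$-spanning set for $A$, so that $c=c_0L\log 2$ depends only on $(a,\varepsilon)$, exactly as in your Step~3.
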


\begin{proof}
	Fix $a,\varepsilon>0$ and $A\subset X$ be Borel.
	For each $n\in\N$, we define 
	\[H_n:=\left\{\nu\in\M(A): \tau(B_{D,n}^*(\nu,\varepsilon))<e^{-an}\right\}.\]
	Each $H_n$ is Borel.  Notice that $E\cap\M(A)\subset\limsup_{n\to\infty}H_n$.  Since
	$\tau(E)>0$ and $\tau(\M(A))=1$, it follows that $\tau(\limsup_{n\to\infty}H_n)>0$.
	By the  Borel--Cantelli Lemma, we have $\sum_{n=1}^{\infty}\tau(H_n)
	=+\infty$. Thus, the set $\mathcal{N}=\{n\in\N: \tau(H_n)\geq e^{-an/2}\}$ is infinite. 
	
	Fix $n\in\mathcal{N}$. Let $E_n$ be a maximal $(n,\varepsilon/2)$-separated set of $ H_n$. Then $H_n\subset\bigcup_{\nu\in E_n}B^*_{D,n}(\nu,\varepsilon/2)$. Moreover, $$1\le\tau(H_n)\leq \sum_{\nu\in E_n}\tau(B^*_{D,n}(\nu,\varepsilon))<e^{-an}|E_n|.$$ We conclude that $|E_n|\geq e^{an/2}$ for all $n\in\mathcal{N}$.
	
	Choose $L\in\N_+$ large enough such that $\sum_{\ell>L}2^{-\ell}<\varepsilon/32$. Since $g_{\ell}$ is continuous for any $\ell\in\N$, there exists $\delta>0$ such that $d(x,y)<\delta$ yields that $|g_{\ell}(x)-g_{\ell}(y)|<\varepsilon/8$ for all $1\leq\ell\leq L$.
	Let $M_n=r_n(T,A,\delta)$ and choose an $(n,\delta)$-spanning set
	$\{z_1,\ldots,z_{M_n}\}\subset X$.  Its closed Bowen balls yield a
	Borel partition $A_1,\ldots,A_{M_n}$ of $K$, with
	$A_i\subset\overline B_n(z_i,\delta)$.  
	
	We define a linear map
	$ \Phi_n\colon\ell_1^{M_n}\longrightarrow\ell_\infty^{nL}$
	by

	\[\Phi_n(\{v_i\}_{1\le i\le  M_n})=\{2^{-\ell}\sum_{i=1}^{M_n}x_i g_{\ell}(T^{j}z_i)\}_{1\leq\ell\leq L,0\leq j<n}.\]
	
	Since $\|g_\ell\|_\infty\leq1$, one has $\|\Phi_n\|\leq1$.
	We now prove that for distinct $\nu,\mu\in E_n$, the following vectors in $\Phi_n(B_1(\ell_1^{M_n}))$ are $\frac{\varepsilon}{2^{L+4}}$-separated:
	\begin{equation*}
		\Phi_n(\nu(A_1),\ldots,\nu(A_{M_n}) \text{ and } \Phi_n((\mu(A_1),\ldots,\mu(A_{M_n})).
	\end{equation*}
	Otherwise, for any $1\leq \ell\leq L$ and $0\leq j<n$ we have
	\begin{equation*}
		\frac{\left|\sum_{i=1}^{M_n}\nu(A_i) g_{\ell}(T^{j}z_i)-\sum_{i=1}^{M_n}\mu(A_i) g_{\ell}(T^{j}z_i)\right|}{2^{\ell}}\leq \frac{\varepsilon}{2^{L+4}}.
	\end{equation*}
	Furthermore, for any $1\leq \ell\leq L$ and $0\leq j<n$, we have that
	\begin{equation*}
		\begin{split}
			&\left|\int g_{\ell}(T^jx)d\nu(x)-\int g_{\ell}(T^jx)d\mu(x)\right|\\
			&\leq \left|\int g_{\ell}(T^jx)d\nu(x)-\sum_{i=1}^{M_n}\nu(A_i)g_{\ell}(T^jz_i)\right|+
			\left|\sum_{i=1}^{M_n}\nu(A_i)g_{\ell}(T^jz_i)-\sum_{i=1}^{M_n}\mu(A_i)g_{\ell}(T^jz_i)\right|\\
			&+\left|\int g_{\ell}(T^jx)d\mu(x)-\sum_{i=1}^{M_n}\mu(A_i)g_{\ell}(T^jz_i)\right|\\
			&=\left|\sum_{i=1}^{M_n}\int_{A_i} \left(g_{\ell}(T^jx)-g_{\ell}(T^jz_i)\right)d\nu(x)\right|+
			\left|\sum_{i=1}^{M_n}\nu(A_i)g_{\ell}(T^jz_i)-\sum_{i=1}^{M_n}\mu(A_i)g_{\ell}(T^jz_i)\right|\\
			&+\left|\sum_{i=1}^{M_n}\int_{A_i} \left(g_{\ell}(T^jx)-g_{\ell}(T^jz_i)\right)d\mu(x)\right|\\
			&\leq \frac{\varepsilon}{8}+\frac{\varepsilon}{16}+\frac{\varepsilon}{8}=\frac{5\varepsilon}{16}.
		\end{split}
	\end{equation*}
	This implies that for all $0\leq j<n$, we have 
	\begin{equation*}
		\begin{split}
			D(T_*^j\nu,T_*^j\mu)&=\sum_{\ell=1}^{\infty}2^{-\ell}\left|\int g_{\ell}(T^jx)d\nu(x)-\int g_{\ell}(T^jx)d\mu(x)\right|\\
			&\leq \sum_{\ell=1}^{L}2^{-\ell} \cdot \frac{5\varepsilon}{16}+ \sum_{\ell=L+1}^{\infty}2^{-\ell}\cdot 2\\
			&\leq \frac{5\varepsilon}{16}+\frac{\varepsilon}{16}=\frac{3\varepsilon}{8}<\frac{\varepsilon}{2}
		\end{split}
	\end{equation*}
	Thus $D_n(T_*\nu,T_*\mu) <\frac{\varepsilon}{2}$.
	which leads to a contradiction with the fact $E_n$ is a $(n,\varepsilon/2)$-separated set. 
	Since $|E_n|\geq e^{an/2}$, by Lemma \ref{lemma GlasnerWeiss}, there exists $c_0>0$ and $N_0\in\N$ such that for every $n\in\mathcal{N}$ with $nL\geq N_0$ we have $M_n\geq 2^{c_0Ln}$. Since $\mathcal{N}$ is an infinite set, we conclude that $$\hUC(T,A)\geq\limsup_{n\to\infty}\frac{\log M_n}{n}\geq c_0L\log 2>0.$$
\end{proof}

\begin{theorem}
	\label{thm:packing-converse}
	Let $(X,T)$ be a  topological dynamical system. Let $K\subset X$ be non-empty and compact.  If $\hP(T,K)=0$, then $\hP(T_*,\M(K))=0$.
\end{theorem}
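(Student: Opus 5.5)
We argue by contradiction, applying the Feng--Huang variational principle (Theorem~\ref{thm:feng-huang}) first on $\M(K)$ and then on $K$. Note that $\M(K)$ is a non-empty compact subset of $\M(X)$, and $(\M(X),T_*)$ is a topological dynamical system. So suppose $\hP(T_*,\M(K))>0$. Theorem~\ref{thm:feng-huang}, applied to $T_*$ and $\M(K)$, yields $\tau\in\M(\M(K))$, i.e.\ $\operatorname{supp}\tau\subset\M(K)$, with $\overline h_\tau(T_*)>0$. Hence the set of $\nu$ with $\overline h_\tau(T_*,\nu)>0$ has positive $\tau$-measure. By monotonicity in the radius, we can then fix $a>0$ and $\varepsilon>0$ such that
\[
E:=\Bigl\{\nu:\limsup_{n\to\infty}-\tfrac1n\log\tau(B^*_{D,n}(\nu,\varepsilon))>a\Bigr\}
\]
has $\tau(E)>0$. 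To see this, write $\{\nu:\overline h_\tau(T_*,\nu)>0\}$ as a countable union over rational $a$ and $\varepsilon=1/k$.

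Next we produce a single measure on $K$ to test against. Let $c=c(a,\varepsilon)>0$ be the constant from Lemma~\ref{thm:local-to-capacity}. Define the barycenter $\mu:=\int\nu\,\dd\tau(\nu)\in\M(X)$. Since $\tau$-a.e.\ $\nu$ lies in $\M(K)$ and $\M(K)$ is closed and convex, we get $\mu\in\M(K)$. By assumption $\hP(T,K)=0$, so Theorem~\ref{thm:feng-huang} gives $\overline h_\mu(T)=0$.

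Now apply Lemma~\ref{lem:large-core} with $\eta=c$ and some small $\kappa$. This gives a Borel set $A$ with $\mu(A)>1-\kappa$ and $\hUC(T,A)<c$. The task is to find a modification $\tau'$ of $\tau$ with $\tau'(\M(A))=1$ and $\tau'(E')>0$, where $E'$ is the analogous set for $\tau'$; Lemma~\ref{thm:local-to-capacity} would then give $\hUC(T,A)\ge c$, a contradiction.

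\textbf{Main obstacle.} The core problem is that $\tau$-typical $\nu$ need not satisfy $\nu(A)=1$. We only know $\int\nu(A^c)\,\dd\tau=\mu(A^c)<\kappa$, so most $\nu$ give $A^c$ small mass, but not zero. I would first try conditioning each $\nu$ on $A$. Set $\Psi(\nu)=\nu(\cdot\cap A)/\nu(A)$ on the set $\{\nu:\nu(A)>1/2\}$, which has $\tau$-measure at least $1-2\kappa$, and let $\tau'=\Psi_*(\tau|_{\{\nu(A)>1/2\}})$, normalized. Then $\tau'(\M(A))=1$, and $D_n^*(\nu,\Psi\nu)\le 2\nu(A^c)$, because every $g_\ell\circ T^j$ has sup-norm at most $1$. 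The difficulty is that this perturbation is small only for $\nu$ with $\nu(A^c)$ small, and the estimate $\tau'(B^*_{D,n}(\Psi\nu,\varepsilon/2))\lesssim\tau(B^*_{D,n}(\nu,\varepsilon))$ needs uniform control of this perturbation. So I would restrict further to $\{\nu:\nu(A^c)<\varepsilon/8\}$, whose $\tau$-measure is at least $1-8\kappa/\varepsilon$. Choosing $\kappa<\varepsilon\,\tau(E)/16$ keeps a positive portion of $E$ inside this set.

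On that portion $\Psi$ moves points by less than $\varepsilon/4$ in every $D_n^*$. Consequently the preimage under $\Psi$ of a $D^*_n$-ball of radius $\varepsilon/2$ around $\Psi\nu$ is contained in $B^*_{D,n}(\nu,\varepsilon)$, so $\tau'$-balls of radius $\varepsilon/2$ are exponentially small with rate $a$. Finally, we apply Lemma~\ref{thm:local-to-capacity} with $\varepsilon/2$ in place of $\varepsilon$, choosing $c=c(a,\varepsilon/2)$ from the start. This yields $\hUC(T,A)\ge c$, which contradicts $\hUC(T,A)<c$ and proves Theorem~\ref{thm:packing-converse}.
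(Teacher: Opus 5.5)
Your proposal is correct and follows essentially the same route as the paper. The chain of steps matches: Feng--Huang on $\M(K)$, the barycenter together with Lemma~\ref{lem:large-core}, Markov's inequality, the conditioning map $\nu\mapsto\nu|_A/\nu(A)$ with the total-variation bound, and finally Lemma~\ref{thm:local-to-capacity} at radius $\varepsilon/2$ with $c=c(a,\varepsilon/2)$. The only cosmetic difference is in what gets pushed forward. The paper pushes forward $\tau$ restricted to $Q\cap\{\nu(A^c)<\theta\}$, so the transferred good set has full measure. You push forward $\tau$ restricted to $\{\nu(A^c)<\varepsilon/8\}$ and only need the transferred good set to have positive measure, which the lemma permits.
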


\begin{proof}
	Assume by contradiction that $\hP(T,K)=0$ whereas $\hP(T_*,\M(K))>0$.
	Note $\M(K)$ is a non-empty compact
	subset of $\M(X)$.  We apply Theorem~\ref{thm:feng-huang} to $(\M(X),T_*)$ and $\M(K)$.  Then there exists $\tau\in\M(\M(X))$ satisfying $\tau(\M(K))=1$ and $\overline h_\tau(T_*)>0$.
	Moreover, there exist $a>0$,
	$\varepsilon>0$, and a Borel set $Q\subset\M(K)$ with
	$\tau(Q)>0$, such that for any $\nu\in Q$,
	\begin{equation}\label{eq:5.9}
		\limsup_{n\to\infty}-\frac1n
		\log\tau(B_{D,n}^*(\nu,\varepsilon))>a.
	\end{equation}
	Let $c=c(a,\varepsilon/2)>0$ be given by
	Lemma~\ref{thm:local-to-capacity}.
	
	We define $\bar\mu\in\M(X)$ by
	\begin{equation*}
		\bar\mu(B):=\int_{\M(X)}\nu(B)\,d\tau(\nu),
		\qquad B\in\mathcal{B}(X).
	\end{equation*}
	Since $\tau(\M(K))=1$, $\bar\mu(K)=\int_{\M(X)}\nu(K)\,d\tau(\nu)=1$.
	By Theorem \ref{thm:feng-huang}, we have $0\leq\overline h_{\bar\mu}(T)
	\leq \hP(T,K)=0$ and thus $\overline h_{\bar\mu}(T)=0$.
	Choose $0<\theta<\min\{1/2,\varepsilon/16\}$.
	By Lemma~\ref{lem:large-core}, we obtain a Borel set $A\subset X$ such that $\bar\mu(A^c)<\frac{\theta\tau(Q)}2$ and 
	\begin{equation}\label{eq:A-small-capacity}
		\hUC(T,A)<c.
	\end{equation}
	
	Using Markov's inequality,
	\begin{equation*}
		\tau(\{\nu\in\M(X):\nu(A^c)\geq\theta\}) \leq\frac1\theta\int_{\M(X)}\nu(A^c)\,d\tau(\nu)=\frac{\bar\mu(A^c)}\theta
		<\frac{\tau(Q)}2.
	\end{equation*}
	Consequently the Borel set $H:=Q\cap\{\nu\in\M(X):\nu(A^c)<\theta\}$
	satisfies
	\[
	\tau(H)\ge \tau(Q)-
	\tau(\{\nu\in\M(X):\nu(A^c)\ge\theta\})>\frac{\tau(Q)}2>0.
	\]  
	Let $\tau_H:=\frac{\tau|_H}{\tau(H)}$.
	Define $\mathcal R_A: H\to \M(A)$ by 
	\begin{equation*}
		\mathcal R_A(\nu):=\frac{\nu|_A}{\nu(A)}
	\end{equation*}
	for $\nu\in H$.
	This is well defined because
	$\nu(A)>1-\theta>1/2$ and $\mathcal R_A$ is Borel.
	
	Fix $\nu\in H$ and set $t=\nu(A^c)<\theta$.  Then $\|\nu-\mathcal R_A(\nu)\|_{\TV}=2t$, where $\|\cdot\|_{\TV}$ denotes the total variation of a signed measure.
	Moreover, for every $j\geq0$ and every $\ell\in\N$,
	\[
	\left|\int g_\ell\,d(T_*^j\nu)
	-\int g_\ell\,d(T_*^j\mathcal R_A(\nu))\right|\leq \|T_*^j(\nu-R_A(\nu))\|_{\TV}\leq \|\nu-R_A(\nu)\|_{\TV}=2t
	\]
	and thus $D(T_*^j\nu,T_*^j\mathcal R_A(\nu))\leq 2t<2\theta<\varepsilon/8$.
	This means $D_n^*(\nu,\mathcal R_A(\nu))<\varepsilon/8$ for any $n\in\N$ and $\nu\in H$.
	
	Let $\lambda=(\mathcal R_A)_*\tau_H$.
	Then $\lambda\in\M(\M(X))$ and $\lambda(\M(A))=1$.  Fix
	$\nu\in H$ and $n\in\N$.  If $\mu\in H$ satisfies $\mathcal R_A(\mu)
	\in B_{D,n}^*(\mathcal R_A(\nu),\varepsilon/2)$,
	then 
	\begin{align*}
		D_n^*(\mu,\nu)
		&\le D_n^*(\mu,\mathcal R_A(\mu))
		+D_n^*(\mathcal R_A(\mu),\mathcal R_A(\nu))
		+D_n^*(\mathcal R_A(\nu),\nu)\\
		&<\frac{\varepsilon}{8}+\frac{\varepsilon}{2}+\frac{\varepsilon}{8}<\varepsilon.
	\end{align*}
	Conclude $\mathcal R_A^{-1}
	\bigl(B_{D,n}^*(\mathcal R_A(\nu),\varepsilon/2)\bigr)
	\subset H\cap B_{D,n}^*(\nu,\varepsilon)$.
	
	By the definitions of $\lambda$ and $\tau_H$,
	\begin{equation*}
		\lambda(B_{D,n}^*(\mathcal R_A(\nu),\varepsilon/2))
		\leq\tau_H(B_{D,n}^*(\nu,\varepsilon))
		\leq\frac{\tau(B_{D,n}^*(\nu,\varepsilon))}{\tau(H)}
	\end{equation*}
	and by \eqref{eq:5.9} we further have
	\begin{equation}\label{eq:transferred-local-entropy}
		\begin{split}
			& \limsup_{n\to\infty}-\frac1n\log
			\lambda(B_{D,n}^*(\mathcal R_A(\nu),\varepsilon/2))\\
			&\geq \limsup_{n\to\infty}-\frac1n\left(\log
			\tau(B_{D,n}^*(\nu,\varepsilon))-\log\tau(H)\right)>a
		\end{split}
	\end{equation}
	for any $\nu\in H$.
	
	Note the set
	\[
	G:=\left\{\mu\in\M(A):
	\limsup_{n\to\infty}-\frac1n
	\log\lambda(B_{D,n}^*(\mu,\varepsilon/2))>a\right\}
	\]
	is Borel.   \eqref{eq:transferred-local-entropy} yields that
	$H\subset\mathcal R_A^{-1}(G)$.  Hence $\lambda(G)
	=\tau_H(\mathcal R_A^{-1}(G))=1$.
	By Theorem~\ref{thm:local-to-capacity}, we obtain that $\hUC(T,A)\geq c$,
	contradicting \eqref{eq:A-small-capacity}. 
\end{proof}


\begin{theorem}
\label{lem:dirac-lifting}
Let $(X,T)$ be a  topological dynamical system. Let $\iota(x)=\delta_x$ and, for $\mu\in\M(X)$, let
$\widehat\mu=\iota_*\mu\in\M(\M(X))$.  Then
\begin{equation}
 \overline h_{\widehat\mu}(T_*)=\overline h_\mu(T).
 \label{eq:dirac-local-equality}
\end{equation}
Consequently, for every non-empty compact $K\subset X$,
\begin{equation}
 \hP(T,K)\leq\hP(T_*,\M(K)).
 \label{eq:dirac-packing-lower-bound}
\end{equation}
\end{theorem}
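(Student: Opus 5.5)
The plan is to compare Bowen balls in $X$ with Bowen balls in $\M(X)$ centred at Dirac measures, pointwise, and then integrate. First, observe that $\iota\colon X\to\M(X)$, $x\mapsto\delta_x$, is a continuous injection from a compact space, hence a homeomorphism onto the compact set $\iota(X)$. Concretely, $D(\delta_x,\delta_y)=\sum_{\ell}2^{-\ell}|g_\ell(x)-g_\ell(y)|$. By uniform continuity of $\iota$ and of $\iota^{-1}$ on $\iota(X)$, for every $r>0$ there are $\rho(r)>0$ and $\sigma(r)>0$, with $\sigma(r)\to0$ as $r\to0$, such that
\[
d(x,y)<\rho(r)\Rightarrow D(\delta_x,\delta_y)<r,\qquad D(\delta_x,\delta_y)<r\Rightarrow d(x,y)<\sigma(r).
\]
Since $T_*^j\delta_x=\delta_{T^jx}$, applying this to each time $0\le j<n$ gives, for every $n$,
\[
B_n(x,\rho(r))\subset \iota^{-1}\bigl(B^*_{D,n}(\delta_x,r)\bigr)\subset B_n(x,\sigma(r)).
\]

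Next I pass to local entropies. Since $\widehat\mu(B^*_{D,n}(\delta_x,r))=\mu(\iota^{-1}B^*_{D,n}(\delta_x,r))$, the inclusions above give
\[
\overline h_\mu(T,x,\sigma(r))\le \overline h_{\widehat\mu}(T_*,\delta_x,r)\le \overline h_\mu(T,x,\rho(r)).
\]
Let $r\to0$. Then $\sigma(r)\to0$, and $\rho(r)\to0$ may also be assumed. Because $\overline h_\mu(T,x,\cdot)$ is monotone in the radius, its limit exists along any sequence of radii tending to $0$. Hence $\overline h_{\widehat\mu}(T_*,\delta_x)=\overline h_\mu(T,x)$ for every $x\in X$. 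Since $\widehat\mu$ is concentrated on $\iota(X)$,
\[
\overline h_{\widehat\mu}(T_*)=\int \overline h_{\widehat\mu}(T_*,\nu)\,d\widehat\mu(\nu)=\int \overline h_{\widehat\mu}(T_*,\delta_x)\,d\mu(x)=\int\overline h_\mu(T,x)\,d\mu(x)=\overline h_\mu(T).
\]
This is \eqref{eq:dirac-local-equality}. Measurability of $x\mapsto\overline h_{\widehat\mu}(T_*,\delta_x)$ is immediate from the pointwise equality with the Borel function $x\mapsto\overline h_\mu(T,x)$.

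For \eqref{eq:dirac-packing-lower-bound}, take $\mu\in\M(K)$. Then $\widehat\mu$ is concentrated on $\iota(\operatorname{supp}\mu)$, which is a compact set contained in $\iota(K)\subset\M(K)$ because $\operatorname{supp}\delta_x=\{x\}$. So $\operatorname{supp}\widehat\mu\subset\M(K)$, and $\M(K)$ is a non-empty compact subset of $\M(X)$. Applying Theorem~\ref{thm:feng-huang} twice, once to $(X,T)$ with $K$ and once to $(\M(X),T_*)$ with $\M(K)$, gives
\[
\hP(T,K)=\sup_{\mu\in\M(K)}\overline h_\mu(T)=\sup_{\mu\in\M(K)}\overline h_{\widehat\mu}(T_*)\le\hP(T_*,\M(K)).
\]

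The only delicate point I expect is the radius bookkeeping in the limit $r\to0$, namely making sure that both $\rho(r)\to0$ and $\sigma(r)\to0$ so that the two-sided sandwich yields equality rather than just an inequality. This is handled by the monotonicity of the local entropy in the radius, as described above.
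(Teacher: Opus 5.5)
Your proposal is correct and follows essentially the same route as the paper. The paper first passes through the intermediate compatible metric $\widetilde d(x,y)=D(\delta_x,\delta_y)$ on $X$, and then proves that $\overline h_\mu(T,x)$ does not depend on the metric by uniform equivalence. You fold those two steps into a single two-sided Bowen-ball sandwich, and the integration and the double application of Theorem~\ref{thm:feng-huang} are the same as in the paper.
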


 \begin{proof}
 	For every $f\in C(X)$, one has $\int f\,d\delta_x=f(x)$.  Hence
 	$\iota$ is continuous, and it is clearly injective.  Since $X$ is
 	compact and $\M(X)$ is Hausdorff, it is a homeomorphism from $X$ onto
 	the compact
 	set $\iota(X)$.  It is equivariant because $T_*\delta_x=\delta_{Tx}$.
 	Define a compatible metric on $X$ by
 	\[
 	\widetilde d(x,y):=D(\delta_x,\delta_y).
 	\]
 	Its $n$th Bowen metric is
 	\begin{equation*}
 		\widetilde d_n(x,y)=D_n^*(\delta_x,\delta_y).
 		\label{eq:dirac-bowen-isometry}
 	\end{equation*}
 	Fix $x\in X$, $r>0$, and $n\in\N$. Note that
 	\begin{align*}
 		\widehat\mu\bigl(B_{D,n}^*(\delta_x,r)\bigr)
 		=\mu\bigl(\iota^{-1}(B_{D,n}^*(\delta_x,r))\bigr)
 		=\mu\bigl(\{y:\widetilde d_n(x,y)<r\}\bigr).
 		\label{eq:dirac-ball-measure}
 	\end{align*}
 	Thus $\overline h^D_{\widehat\mu}(T_*,\delta_x)
 	=\overline h^{\widetilde d}_\mu(T,x)$, where the superscript means the metric used in the definition.
 	It remains to prove $\overline h^{\widetilde d}_\mu(T,x)=\overline h^{d}_\mu(T,x)$.
 	Since both $d$ and $\widetilde d$ generate the topology of the compact space $X$,
 	they are uniformly equivalent.  Given $r>0$, choose $s>0$ such that
 	$d(x,y)<s$ implies $\widetilde d(x,y)<r$. Then $B_{d,n}(x,s)\subset B_{\widetilde d,n}(x,r)$ for all $n\in\N$. 
 	Consequently,
 	\[
 	\limsup_{n\to\infty}-\frac1n
 	\log\mu(B_{\widetilde d,n}(x,r))
 	\le
 	\limsup_{n\to\infty}-\frac1n
 	\log\mu(B_{d,n}(x,s))
 	\le \overline h_\mu^{\,d}(T,x).
 	\]
 	Letting $r\to 0$, it follows that $\overline h_\mu^{\,\widetilde d}(T,x)\leq
 	\overline h_\mu^{\,d}(T,x)$.  Similarly, one can prove that $\overline h_\mu^{\,d}(T,x)\leq \overline h_\mu^{\,\widetilde d}(T,x)$. Conclude $\overline h_\mu^{\,\widetilde d}(T,x)=
 	\overline h_\mu^{\,d}(T,x)$ and thus $\overline h_{\widehat\mu}(T_*,\delta_x)
 	=\overline h_\mu(T,x)$.
 	Integrating with respect to $\widehat\mu=\iota_*\mu$ proves
 	\eqref{eq:dirac-local-equality}.
 	
 	Now let $K\subset X$ be non-empty and compact.  If $\mu(K)=1$, then
 	$\widehat\mu(\M(K))=1$, since $\delta_x\in\M(K)$ if and only if 
 	$x\in K$.  We apply Theorem~\ref{thm:feng-huang} to
 	$\M(K)$ and obtain that
 	\begin{align*}
 		\htop^P(T_*,\M(K))
 		&\ge \overline h_{\widehat\mu}(T_*)
 		=\overline h_\mu(T).
 	\end{align*}
 	Taking the supremum over all $\mu\in\M(X)$ with $\mu(K)=1$ and using
 	Theorem~\ref{thm:feng-huang} again proves
 	\eqref{eq:dirac-packing-lower-bound}.
 \end{proof}
\begin{proof}[Proof of Theorem \ref{thm:intro-packing}]
Theorem  \ref{thm:intro-packing} is obtained by combining Theorem \ref{thm:packing-converse} and Theorem~\ref{lem:dirac-lifting}   above.
\end{proof}

\section{Proof of Theorem \ref{thm:intro-bowen}}
\label{sec: proof of thm1.3} 
\begin{prop}
	\label{prop:bowen-product}
	For every non-empty compact $K\subset X$ and $m\geq1$,
	\[
	\hB(T^{\times m},K^m)\geq m\,\hB(T,K).
	\]
	Consequently, 
	\[
	\hB(T,K)>0 \implies \hB(T_*,\mathcal{M}(K))=\infty.
	\]
\end{prop}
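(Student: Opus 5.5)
The plan is to prove the product inequality through the Feng--Huang variational principle, Theorem~\ref{thm:feng-huang}, applied to the compact set $K^m\subset X^m$ under the product map $T^{\times m}$. After that, the consequence will follow from the embedding of Lemma~\ref{lem:atomic-embedding}, exactly as in Proposition~\ref{prop:upper-amplification}.

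Let $s<\hB(T,K)$. By Theorem~\ref{thm:feng-huang} there is $\mu\in\M(K)$ with $\underline h_\mu(T)>s$. Consider the product measure $\mu^{\otimes m}$. It is supported on $K^m$, so it lies in $\M(K^m)$ for the system $(X^m,T^{\times m})$ with metric $d^{\times m}$. Since $d^{\times m}$ is a max metric, its Bowen balls factor:
\[
B^{\times m}_n((x_1,\ldots,x_m),r)=\prod_{i=1}^m B_n(x_i,r).
\]
Hence
\[
\mu^{\otimes m}\bigl(B^{\times m}_n(\boldsymbol x,r)\bigr)=\prod_{i=1}^m\mu(B_n(x_i,r)),
\]
and therefore
\[
-\frac1n\log\mu^{\otimes m}(B_n^{\times m}(\boldsymbol x,r))=\sum_{i=1}^m -\frac1n\log\mu(B_n(x_i,r)).
\]

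Taking $\liminf_n$ and using superadditivity of $\liminf$ gives
\[
\underline h_{\mu^{\otimes m}}(T^{\times m},\boldsymbol x,r)\ge \sum_i \underline h_\mu(T,x_i,r).
\]
Letting $r\to0$ is justified by monotone convergence in $r$, since each term is nondecreasing as $r\downarrow0$. This yields
\[
\underline h_{\mu^{\otimes m}}(T^{\times m},\boldsymbol x)\ge\sum_i\underline h_\mu(T,x_i).
\]
Integrating against $\mu^{\otimes m}$ with Fubini/Tonelli gives $\underline h_{\mu^{\otimes m}}(T^{\times m})\ge m\,\underline h_\mu(T)>ms$. Applying Theorem~\ref{thm:feng-huang} to $K^m$ then gives $\hB(T^{\times m},K^m)\ge ms$, and letting $s\uparrow\hB(T,K)$ proves the inequality. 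If $\hB(T,K)=\infty$ the same argument gives $\infty$.

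The main point needing care is exactly this superadditivity of $\liminf$ together with the measurability needed for Tonelli. Measurability holds because $x\mapsto\mu(B_n(x,r))$ is lower semicontinuous. The direction of the inequality is the favorable one for $\liminf$, so no independence of the limits is required. Had one wanted to avoid the variational principle, the alternative would be a direct covering argument, which is messier because covers of $K^m$ by product Bowen balls need not be products of covers. The measure route sidesteps this, and I expect it to go through.

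For the consequence, Lemma~\ref{lem:atomic-embedding} shows that $\Phi_m$ is a conjugacy from $(X^m,T^{\times m})$ onto the compact invariant set $\Phi_m(X^m)\subset\M(X)$. Bowen entropy is a topological-conjugacy invariant for subsets, because it is independent of the compatible metric and $\Phi_m$ is a homeomorphism onto its image intertwining the maps; one transports the metric $D$ back to $X^m$ as in the proof of Theorem~\ref{lem:dirac-lifting}. So
\[
\hB(T_*,\M(K))\ge\hB(T_*,\Phi_m(K^m))=\hB(T^{\times m},K^m)\ge m\,\hB(T,K),
\]
using the monotonicity in Proposition~\ref{prop:entropy-properties}(i). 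Letting $m\to\infty$ gives $\hB(T_*,\M(K))=\infty$ whenever $\hB(T,K)>0$. Here $K$ is assumed compact, as in the statement, so that Theorem~\ref{thm:feng-huang} applies.
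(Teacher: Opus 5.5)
Your proposal is correct and follows essentially the same route as the paper's proof. Both use the product measure $\mu^{\otimes m}$ on $K^m$, the factorization of Bowen balls under the max metric, superadditivity of $\liminf$, the limit $r\to0$, Tonelli, and the Feng--Huang principle applied to $K^m$ and then to $K$. Your measurability remark and your derivation of the consequence via the embedding $\Phi_m$ and monotonicity (as in Proposition~\ref{prop:upper-amplification}) supply details the paper leaves implicit.
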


\begin{proof}
	Choose $\mu\in\M(K)$ and denote by $\nu=\mu^{\otimes m}$ the product measure on $K^m$.  Obviously, $\nu\in\M(K^m)$.

    For every $\boldsymbol{x}=(x_1,\ldots,x_m)\in X^m$ and $r>0$, we have $B_n^{T^{\times m}}(\boldsymbol{x},r)
	=\prod_{i=1}^m B_n^T(x_i,r)$ and thus
\[
\begin{aligned}
\underline h_{\nu}
   (T^{\times m},\boldsymbol{x},r)
&=\liminf_{n\to\infty}
   -\frac1n
   \log \nu\bigl(B_n^{T^{\times m}}(\boldsymbol{x},r)\bigr)\\
&=\liminf_{n\to\infty}
   \sum_{i=1}^m
   \left(
   -\frac1n\log\mu\bigl(B_n^T(x_i,r)\bigr)
   \right)\\
&\geq \sum_{i=1}^m \liminf_{n\to\infty} \left(
   -\frac1n\log\mu\bigl(B_n^T(x_i,r)\bigr)
   \right)\\
&=\sum_{i=1}^m
   \underline h_\mu(T,x_i,r).
\end{aligned}
\]
Letting $r\to0$, we obtain that $h_{\nu}
   (T^{\times m},\boldsymbol{x})\geq \sum_{i=1}^m
   \underline h_\mu(T,x_i)$. 
Moreover, by Tonelli's theorem, we further have
\[
\begin{aligned}
\underline h_{\nu}(T^{\times m})
&=
\int_{X^m}
\underline h_{\nu}
   (T^{\times m},\boldsymbol{x})
\,d\nu(\boldsymbol{x})\\
&\geq
\int_{X^m}
\sum_{i=1}^m\underline h_\mu(T,x_i)
\,d\nu(x_1,\ldots,x_m)\\
&=
\sum_{i=1}^m
\left(
\int_X\underline h_\mu(T,x_i)\,d\mu(x_i)
\prod_{\substack{1\leq j\leq m\\ j\neq i}}
\int_X1\,d\mu(x_j)
\right)\\
&=
m\int_X\underline h_\mu(T,x)\,d\mu(x)\\
&=
m\,\underline h_\mu(T).
\end{aligned}
\]

We apply Theorem~\ref{thm:feng-huang} to $K^m$ and obtain that $\hB(T^{\times m},K^m)\geq \underline h_{\nu}(T^{\times m})\geq m \underline h_\mu(T)$. Using
Theorem~\ref{thm:feng-huang} again for $K$, we finally conclude that $\hB(T^{\times m},K^m)\geq m\,\hB(T,K)$ since $\mu\in\M(K)$ is arbitrary.

\end{proof}

\begin{lemma}\label{lemma:mass}
	Let $K\subset X$ be a Borel set and $\mu\in\M(K)$. If there exist $a\geq 0$ and $\varepsilon>0$ such that $\mu(B_n(x,\varepsilon))\leq e^{-an}$ for every $x\in K$ and $n\in\N$. Then $\hB(T,K)\geq a$.
\end{lemma}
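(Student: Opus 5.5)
This is a mass distribution principle for Bowen entropy, and I would prove it directly from the definition of $\Lambda^s$, without the variational principle (Theorem~\ref{thm:feng-huang}), since $K$ is only Borel. If $a=0$ there is nothing to prove, so assume $a>0$ and fix $0\le s<a$. It suffices to show that $\Lambda^s_{N,r}(T,K)\ge 1$ for all small $r$ and all $N$. Then $\Lambda^s(T,K)\ge 1>0$, hence $\hB(T,K)\ge s$, and letting $s\uparrow a$ gives the claim.

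Take $r\le\varepsilon/2$ and any countable cover $K\subset\bigcup_i B_{n_i}(x_i,r)$ with $x_i\in X$ and $n_i\ge N$. The centers $x_i$ need not lie in $K$, so I recenter. Discard every ball that misses $K$. For each remaining ball, pick $y_i\in K\cap B_{n_i}(x_i,r)$. The triangle inequality for $d_{n_i}$ gives
\[
B_{n_i}(x_i,r)\subset B_{n_i}(y_i,2r)\subset B_{n_i}(y_i,\varepsilon).
\]
The hypothesis then yields $\mu(B_{n_i}(x_i,r))\le e^{-an_i}$. Since $\mu(K)=1$, countable subadditivity gives
\[
1\le\sum_i\mu(B_{n_i}(x_i,r))\le\sum_i e^{-an_i}\le\sum_i e^{-sn_i},
\]
where the last step uses $s\le a$. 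Taking the infimum over covers gives $\Lambda^s_{N,r}(T,K)\ge1$ for all $N$ and all $r\le\varepsilon/2$. Passing to the limits in $N$ and then $r$ gives $\Lambda^s(T,K)\ge 1$.

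This argument actually works with $s=a$ itself, so $\Lambda^a(T,K)\ge1$. By the dichotomy defining $\hB$ ($\Lambda^s=0$ for $s>\hB$), this forces $a\le \hB(T,K)$ directly.

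The only delicate point is the recentering step. It is needed because the covering balls in the definition of $\Lambda$ have centers in $X$, while the mass bound is assumed only at points of $K$. This is why I shrink $r$ to $\varepsilon/2$, which is harmless since $\Lambda^s$ is obtained as a limit $r\to0$. The remaining measure-theoretic points are routine: the Bowen balls are open, hence Borel, so $\mu$ of them is defined, and $\mu(K)=1$ because $\mu\in\M(K)$.
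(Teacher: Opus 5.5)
Your proof is correct and follows essentially the same route as the paper. You recenter each covering Bowen ball of radius at most $\varepsilon/2$ at a point of $K$, getting an $\varepsilon$-ball where the mass bound applies, and then $\mu(K)=1$ with subadditivity gives $\Lambda^a_{N,r}(T,K)\ge 1$, hence $\Lambda^a(T,K)\ge 1$ and $\hB(T,K)\ge a$. Your explicit choice $y_i\in K\cap B_{n_i}(x_i,r)$, discarding balls that miss $K$, makes precise a point the paper leaves implicit, since the hypothesis only controls balls centred in $K$.
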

\begin{proof}
	Fix $N\in\N$ and let $K\subset\bigcup_i B_{n_i}(z_i,\frac{\varepsilon}{2})$ be a cover of $K$ with $N\leq n_i\in\N$. Choose $y_i\in B_{n_i}(z_i,\frac{\varepsilon}{2})$ for each $i$. If $y\in K\cap B_{n_i}(z_i,\frac{\varepsilon}{2})$, then $d_{n_i}(y,y_i)<\varepsilon$. Conclude $K\cap B_{n_i}(z_i,\frac{\varepsilon}{2})\subset B_{n_i}(y_i,\varepsilon)$. Since $\mu(K)=1$,
	\begin{equation*}
		\begin{split}
			1\leq\sum_i\mu(K\cap B_{n_i}(z_i,\frac{\varepsilon}{2}))\leq\sum_i\mu(B_{n_i}(y_i,\varepsilon))\leq \sum_i e^{-an_i}
		\end{split}
	\end{equation*}
	and thus $\Lambda^a_{N,\varepsilon/2}(T,K)\geq 1$. Letting $N\to\infty$ and $\varepsilon\to0$, we have $\Lambda^a(T,K)\geq1$. Hence $\hB(T,K)\geq a$.
\end{proof}

\begin{theorem}
	\label{thm:example-bowen-reverse}
	There are a compact metric dynamical system $(X,T)$ and a non-empty
	compact $K\subset X$ such that
	\[
	\hB(T,K)=0,
	\qquad
	\hB(T_*,\M(K))=+\infty.
	\]
\end{theorem}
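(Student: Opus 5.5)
Since Theorem~\ref{thm:intro-packing} forces $\hP(T,K)>0$ whenever $\hP(T_*,\M(K))>0$, the set $K$ must have positive packing entropy and zero Bowen entropy. Countable sets have zero packing entropy, so $K$ has to be uncountable. The plan is to build $K$ as the union of two such ``gappy'' sets whose free time windows are complementary. Each piece alone has $\hB=0$. The induced system, however, sees finitely many atoms from both pieces at once, and on a product of the two pieces the freedom is present at every time.

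\textbf{Construction.} Let $X=\{0,1\}^{\N}$ with the shift $T$, and fix a very rapidly increasing sequence $0=n_0<n_1<n_2<\cdots$, for instance with $n_{i+1}\ge 2^{\,n_i}$ so that $n_i/n_{i+1}\to0$. Set $I_i=[n_i,n_{i+1})$ and define
\[
K_1=\{x\in X: x_j=0 \text{ for } j\in I_i,\ i \text{ odd}\},\qquad
K_2=\{x\in X: x_j=0 \text{ for } j\in I_i,\ i \text{ even}\}.
\]
Both sets are closed, and we take $K=K_1\cup K_2$.

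\textbf{Step 1: $\hB(T,K)=0$.} By Proposition~\ref{prop:entropy-properties}(ii) it suffices to show $\hB(T,K_1)=\hB(T,K_2)=0$. For $K_1$, take $N=n_{i+1}$ with $i$ odd. The set $K_1$ is covered by at most $2^{n_i+1}$ Bowen balls $B_{N}(\cdot,r)$ (fix $r<1$ and shift lengths by a constant). Hence $\Lambda^s_{N,r}(T,K_1)\le 2^{n_i+O(1)}e^{-sn_{i+1}}\to0$ for every $s>0$, because $n_i/n_{i+1}\to0$. The same argument handles $K_2$ with $i$ even. By Theorem~\ref{thm:feng-huang}, equivalently, every $\mu\in\M(K_j)$ has lower local entropy $0$, and this also gives the conclusion.

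\textbf{Step 2: $\hB(T_*,\M(K))\ge m\log 2$ for every $m$.} Use the embedding $\Phi_{2m}$ of Lemma~\ref{lem:atomic-embedding}. It maps $K_1^m\times K_2^m\subset K^{2m}$ into $\M(K)$, it is equivariant, and it is a homeomorphism onto its image. Since Bowen entropy does not depend on the choice of compatible metric, we get $\hB(T_*,\M(K))\ge \hB(T^{\times 2m},K_1^m\times K_2^m)$. To bound the right side from below, apply Lemma~\ref{lemma:mass} to $Z=K_1^m\times K_2^m$, with $\mu$ the product of uniform Bernoulli measures on the free coordinates of each factor and point masses at $0$ on the frozen coordinates. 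Fix $\varepsilon<1$ in the product metric $d^{\times 2m}$. A $d^{\times 2m}_n$-ball of radius $\varepsilon$ pins down every coordinate $j<n$ in each of the $2m$ components. Every time $j$ is free for exactly one of $K_1,K_2$, and there are $m$ copies of each. Hence every $j<n$ contributes exactly $m$ free binary digits, and $\mu(B_n(\boldsymbol x,\varepsilon))\le 2^{-mn}$ for all $\boldsymbol x\in Z$ and all $n$. Lemma~\ref{lemma:mass} then gives $\hB\ge m\log2$. Letting $m\to\infty$ yields $\hB(T_*,\M(K))=+\infty$.

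\textbf{Main obstacle.} The key step is the uniform mass bound in Step 2, namely the complementarity of the free windows, which makes the product free at every time $n$ with no sparse gaps. One also has to be careful that Lemma~\ref{lemma:mass} is applied on the product system with the product metric, and only afterwards transported to $\M(K)$ through the conjugacy $\Phi_{2m}$ using metric independence. This avoids estimating $D^*_n$-balls directly. Finally, since $K$ is not $T$-invariant, Proposition~\ref{prop:entropy-properties}(iv) is never used; this is exactly what allows the gap between $\hB$ and $\hP$ that the example exploits.
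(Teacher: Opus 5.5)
Your proof is correct. Step~1 is essentially the paper's argument: the paper takes $N_m=2^{2^m}$ and uses that each $J_i$ has lower density zero, while you evaluate the cover at the ends $n_{i+1}$ of the frozen blocks, which amounts to the same thing.

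Step~2, however, takes a genuinely different route.

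\textbf{The paper's route.} It stays inside $\M(X)$. It builds a continuum of non-atomic measures $\mu_p=\frac12\nu^{(0)}_p+\frac12\nu^{(1)}_p$, indexed by bias sequences $p\in[0,1]^{\N}$. It chooses the dense sequence so that $g_1(x)=x_0$, which gives the lower bound $D(T_*^j\mu_p,T_*^j\mu_q)\ge\frac14|p_j-q_j|$. It then applies Lemma~\ref{lemma:mass} directly to the grid family $\Phi(F_r)$, using $D^*_n$-balls, and obtains $\log(r+1)$.

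\textbf{Your route.} You reuse the atomic embedding $\Phi_{2m}$ of Lemma~\ref{lem:atomic-embedding} to move the problem to the product system $(X^{2m},T^{\times 2m})$. There the mass-distribution estimate is a pure cylinder count and gives $m\log 2$.

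\textbf{What this buys.} The estimates are more elementary, and the mechanism is the same one behind the forward implication in Proposition~\ref{prop:bowen-product}. It also isolates the phenomenon precisely: already $\hB(T^{\times 2},K_1\times K_2)\ge\log 2$ while $\hB(T,K_1)=\hB(T,K_2)=0$. Moreover, the measures in $\M(K)$ with at most $2m$ atoms already carry Bowen entropy at least $m\log 2$.

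\textbf{What it costs.} You need the conjugacy transfer $\hB(T_*,\Phi_{2m}(Z))=\hB(T^{\times 2m},Z)$. This rests on two facts:
\begin{enumerate}
\item independence of the compatible metric;
\item the standard radius-doubling argument, which shows that allowing Bowen-ball centers anywhere in $\M(X)$, rather than only in the invariant compact set $\Phi_{2m}(X^{2m})$, does not change the critical value.
\end{enumerate}
The paper passes over the same point in Propositions~\ref{prop:upper-amplification} and~\ref{prop:bowen-product}, so this is fine, but you should state it.

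One cosmetic point. ``$\varepsilon<1$ pins down the $0$-th coordinate'' is true for the metric $2^{-\min\{k:x_k\neq y_k\}}$. With the paper's metric $\sum_r 2^{-r-1}|x_r-y_r|$ you need $\varepsilon\le 1/2$. Since Lemma~\ref{lemma:mass} only requires some $\varepsilon>0$, simply take $\varepsilon$ small enough. The same remark applies to the choice ``$r<1$'' in Step~1.
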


\begin{proof}
	Let $X=\{0,1\}^{\Nzero}$ and define $T: X\to X$ by $T((x_n)_{n\in\N})=(x_{n+1})_{n\in\N}$.
	Equip $X$ with the metric
	\begin{equation*}
		d(x,y)=\sum_{r=0}^{\infty}2^{-r-1}|x_r-y_r|.
	\end{equation*}
	Set $N_0=0$ and $N_m=2^{2^m}$ for $m\in\N$ and define
	\[
	J_0=\bigcup_{r\geq0}[N_{2r},N_{2r+1})\cap\Nzero,
	\qquad
	J_1=\bigcup_{r\geq0}[N_{2r+1},N_{2r+2})\cap\Nzero.
	\]
	Since $J_0\cap[0,N_{2r+2})\subset N_{2r+1}$ and $J_1\cap[0,N_{2r+1})\subset N_{2r}$ for any $r\in\N$ and $N_{m-1}/N_{m}=2^{-2^{m-1}}\to0$ as $m\to\infty$, it follows that
	\begin{equation}\label{density}
		\liminf_{n\to\infty}\frac{|J_0\cap [0,n)|}{n}=0,
		\qquad
		\liminf_{n\to\infty}\frac{|J_1\cap [0,n)|}{n}=0.
	\end{equation}

	For $i=0$ or $1$, set $K_i=\{x\in X:x_r=0\text{ for }r\notin J_i\}$ and $K=K_0\cup K_1$.
	Then $K$ is compact.  
	Fix
	$i\in\{0,1\}$, $\varepsilon>0$ and $n\in\N$.  Choose $L\in\N$ large enough such that
	$2^{-L-1}<\varepsilon$.
	If $x,y\in K_i$ satisfy $x_r=y_r$ for $r=1,\ldots,n+L$, then $d(T^jx,T^jy)\leq2^{-L-1}<\varepsilon$ for every $0\leq j<n$. That is to say, $d_n(x,y)<\varepsilon$.
	In this way, $K_i$ is covered by $(n,\varepsilon)$-Bowen balls with cardinality exactly $2^{|J_i\cap[0,n+L)|}$ for each $n\in\N$. 
	
	Fix $s>0$ and $N\in\N$. Then $\Lambda_{N,\varepsilon}^s(T,K_i)\leq 2^{|J_i\cap[0,n+L)|}e^{-ns}$ whenever $n\geq N$. By Proposition \ref{prop:entropy-properties}, we have $\Lambda_{\varepsilon}^s(T,K_i)=0$ and thus $\Lambda^s(T,K_i)=0$. Since $s>0$ is arbitrary, conclude $\hB(T,K_i)=0$ for $i=0$ and $1$. By Proposition \ref{prop:entropy-properties} we further have
	$\hB(T,K)=\max\{\hB(T,K_0),\hB(T,K_1)\}=0$.

	We now prove that $\hB(T_*,\M(K))=\infty$.
	Choose a dense sequence $(g_{\ell})_{\ell\in\N}$ in the unit ball of $C(X)$ with $g_1((x_n)_{n\in\N})=x_0$. Let $D$ be defined by \eqref{eq:weak* metric}. 
	For $p=(p_j)\in [0,1]^{\N}$ and $i=0,1$, denote by
	$\nu_p^{(i)}$ the product probability measure whose $j$-th coordinate
	has distribution $(1-p_j)\delta_0+p_j\delta_1$ if $j\in J_i$, and is
	$\delta_0$ otherwise.  Then $\nu_p^{(i)}(K_i)=1$.  Set
	\[
	\Phi(p):=\mu_p:=\frac12\nu_p^{(0)}+\frac12\nu_p^{(1)}.
	\]
	Obviously $\mu_p(K)=1$ and hence $\mu_p\in\M(K)$.  The map
	$\Phi$ is continuous. Moreover, for every $j\geq 0$
	\begin{equation*}
		\int_Xg_1 d(T_*^j\mu_p)=\int_X x_jd\mu_p=\frac{1}{2}p_j
	\end{equation*}
	and thus for any $p,q\in[0,1]^{\N}$ it holds
	\begin{equation}\label{eq:probability estimate}
		D(T_*^j\mu_p,T_*^j\mu_q)\geq \frac{1}{2}\left|\int_X g_1 d(T_*^j\mu_p)-\int_X g_1 d(T_*^j\mu_q)\right|=\frac{1}{4}|p_j-q_j|.
	\end{equation}
	If $\mu_p=\mu_q$, then $p_j=q_j$ for all $j\in\N$. Conclude the map $\Phi$ is injective.
	
	Fix $r\in\N$ and set 
	\begin{equation*}
		E_r:=\left\{0,\frac{1}{r},\ldots,\frac{r-1}{r},1\right\},
		\qquad
		F_r:=E_r^{\N}.
	\end{equation*}
	Let 
	\[\lambda_r:=\left(\frac{1}{r+1}\sum_{k=0}^{r}\delta_{\frac{k}{r}}\right)^{\otimes\N}\]
	be the product probability measure on $F_r$. Denote $C_r:=\Phi(F_r)$ and $\tau_r:=\Phi_*\lambda_r$.
	Obviously $\lambda_r(F_r)=1$ and $\tau_r(C_r)=1$. 
	
	Let $p,q\in F_r$ and $n\in\N$. By \eqref{eq:probability estimate}, if $D_n^*(\mu_p,\mu_q)<\frac{1}{8r}$ then $|p_j-q_j|<\frac{1}{2r}$ for all $0\leq j<n$. Note that either $p_j=q_j$ or $|p_j-q_j|\geq\frac{1}{r}$ for $p,q\in F_r$. Conclude $p_j=q_j$ for all $0\leq j<n$ whenever $D_n^*(\mu_p,\mu_q)<\frac{1}{8r}$. Thus for every $p\in F_r$ and $n\in\N$
	\begin{equation*}
		\begin{split}
			\tau_r(B_{D,n}^*(\mu_p,\frac{1}{8r}))&=\lambda_r(\Phi^{-1}(B_{D,n}^*(\mu_p,\frac{1}{8r})))\\
			&\leq\lambda_r(\{q\in F_r: q_j=p_j \text{ for } 0\leq j<n\})=\frac{1}{(r+1)^n}.
		\end{split}
	\end{equation*}
	By Theorem \ref{lemma:mass} we have $\hB(T_*,\M(K))\geq\log(r+1)$. Since $r\in\N$ is arbitrary, conclude $\hB(T_*,\M(K))=\infty$.
\end{proof}
\begin{proof}[Proof of Theorem \ref{thm:intro-bowen}]
Theorem  \ref{thm:intro-bowen} is obtained by combining Proposition \ref{prop:bowen-product} and Theorem~\ref{thm:example-bowen-reverse}   above.
\end{proof}

\section*{Acknowledgements}
Q. Huo was partially supported by the National Key Research and Development Program of China 2024YFA1013600, the China Postdoctoral Science Foundation 2025M773065 and  Fundamental Research Funds for the Central Universities WK0010250102. X. Wang was partially supported by the National Key Research and Development Program of China 2024YFA1013600, the  Postdoctoral Fellowship Program and China Postdoctoral Science Foundation BX2026007, the China Postdoctoral Science Foundation 2025M783147 and  Fundamental Research Funds for the Central Universities.   Both authors are grateful to Wen Huang, Kairan Liu and Leiye Xu useful discussions.

\end{document}